\DeclarePairedDelimiter{\ceil}{\lceil}{\rceil}
\DeclarePairedDelimiter\floor{\lfloor}{\rfloor}
\newcommand{\lebn}
\theoremstyle{plain}
\newtheorem{prop}[equation]{Proposition}
\newtheorem{thm}[equation]{Theorem}
\newtheorem{conj}[equation]{Conjecture}
\newtheorem{cor}[equation]{Corollary}
\newtheorem{lem}[equation]{Lemma}
\theoremstyle{definition}
\numberwithin{equation}{section}
\newcommand{\D}{\Delta}
\tikzset{nodc/.style={circle,draw=blue!50,fill=pink!80,inner sep=1.6pt}}
\tikzset{nodr/.style={circle,draw=black,fill=green!50!black,inner sep=2pt}}
\tikzset{nodel/.style={circle,draw=black,inner sep=2.2pt}}
\tikzset{nodinvisible/.style={circle,draw=white,inner sep=2pt}}
\tikzset{nodpale/.style={circle,draw=gray,fill=gray,inner sep=1.6pt}}
\tikzset{nod1/.style={circle,draw=black,fill=black,inner sep=1pt}}
\tikzset{nod2/.style={circle,draw=black,fill=blue!75!black,inner sep=1.6pt}}
\tikzset{nod3/.style={circle,draw=black,fill=black,inner sep=1.8pt}}
\tikzset{noddiam/.style={diamond,draw=black,inner sep=2pt}}
\tikzset{nodw/.style={circle,draw=black,inner sep=1.8pt}}
 \def\@textbottom{\vskip \z@ \@plus 10pt}
 \let\@texttop\relax
\begin{document}

\bibliographystyle{plain}

\title[2-distance coloring of planar graphs with girth five]{Some results on $2$-distance coloring of planar graphs with girth five}

\author{Zakir Deniz}

\address{Department of Mathematics, D\"uzce University, D\"uzce, 81620, Turkey.}
\email{zakirdeniz@duzce.edu.tr}

\keywords{Coloring, 2-distance coloring, girth, planar graph.}
\date{\today}
\thanks{}
\subjclass[2010]{}

\begin{abstract}
A vertex coloring of a graph $G$ is called a 2-distance coloring if any two vertices at distance at most $2$ from each other receive different colors. Suppose that $G$ is a planar graph with girth $5$ and maximum degree $\D$. We prove that $G$ admits a $2$-distance $\D+7$ coloring, which improves the result of Dong and Lin (J. Comb. Optim. 32(2), 645-655, 2016).  Moreover, we prove that $G$ admits a $2$-distance $\D+6$ coloring when $\D\geq 10$.
\end{abstract}
\maketitle

\section{Introduction}

We consider only finite simple graphs throughout this paper, and refer to \cite{west} for terminology and notation not defined here. Let $G$ be a graph, we use $V(G),E(G),F(G),\D(G)$ and $g(G)$ to denote the vertex, edge and face set, the maximum degree and girth of $G$, respectively. If there is no confusion in the context, we abbreviate $\D(G),g(G)$ to $\D,g$. 
A 2-distance coloring is a proper vertex coloring where two vertices that are adjacent or have a common neighbour receive different colors, and the smallest number of colors for which $G$ admits a 2-distance coloring is known as the 2-distance chromatic number $\chi_2(G)$ of $G$. 

In 1977, Wegner \cite{wegner} posed the following conjecture.

\begin{conj}\label{conj:main}
For every planar graph $G$, $\chi_2(G) \leq 7$ if $\Delta=3$, $\chi_2(G) \leq \Delta+5$ if $4\leq \Delta\leq 7$, and $\chi_2(G) \leq  \floor[\big]{\frac{3\Delta}{2}}+1$ if $\Delta\geq 8$.
\end{conj}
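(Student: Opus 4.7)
The plan is to attack the conjecture by the standard discharging technique combined with a minimum counterexample argument, treating the three ranges of $\Delta$ separately since the target bounds differ. Let $G$ be a planar graph of minimum order $|V(G)|+|E(G)|$ for which $\chi_2(G)$ exceeds the bound claimed for its maximum degree. By minimality, every proper subgraph $H$ of $G$ admits a 2-distance coloring with the prescribed palette size, and I may assume that $G$ is 2-connected and embedded in the plane.

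The first step is to compile a catalogue of \emph{reducible configurations}: small subgraphs $H\subseteq G$ such that the graph $G'$ obtained by removing $H$ still satisfies the inductive hypothesis, and every 2-distance coloring of $G'$ extends to $G$ by a greedy or Hall-type argument. Since a vertex $v$ has at most $d(v)+d(v)(\Delta-1)$ vertices within distance $2$, the extension succeeds when the palette exceeds the sum of already-used colors in the $2$-neighborhood of each deleted vertex. Typical targets are: vertices of small degree adjacent to a controlled number of large-degree vertices; short paths through $2$- and $3$-vertices; and small faces incident to many low-degree vertices. For $\Delta=3$ one further uses Brooks-type extensions to eliminate the sporadic exception $K_4$.

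The second step is the discharging itself. Assign to each vertex $v$ the initial charge $\mu(v)=d(v)-4$ and to each face $f$ the charge $\mu(f)=d(f)-4$, so that by Euler's formula $\sum_v\mu(v)+\sum_f\mu(f)=-8$. Design local rules that transfer charge from high-degree vertices (and long faces) to low-degree vertices and short faces, calibrating the rates to the three regimes: for $\Delta=3$ one exploits a direct planarity/structural analysis; for $4\le\Delta\le 7$ one budgets roughly $\Delta+5$ colors, which leaves enough slack to absorb local deficits; for $\Delta\ge 8$ the rates must scale with $\Delta$, taking advantage of the fact that only vertices in 2-neighborhoods of a high-degree vertex really compete for colors. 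Assuming none of the reducible configurations appears, the final charges should be non-negative everywhere, contradicting $\sum\mu=-8$.

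The main obstacle, by a wide margin, is the third range $\Delta\ge 8$ with target $\lfloor 3\Delta/2\rfloor+1$, which is known to be tight on Wegner's own construction of two adjacent vertices of degree $\Delta$ blown up with a matching. Here no slack is available for discharging, and the best known general bounds (Molloy--Salavatipour, Havet--van den Heuvel--McDiarmid--Reed, and successors) are only of the form $\lceil 5\Delta/3\rceil + O(1)$ or $(1+o(1))\tfrac{3\Delta}{2}$. I therefore expect pure discharging to fall short in this regime, and a complete proof would likely require a genuinely new structural ingredient tailored to the extremal configurations, for instance a packing, probabilistic, or entropy-compression argument exploiting the sparsity of girth-$5$ planar graphs to constrain the neighborhoods of near-maximum-degree vertices. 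Realistically, the approach above delivers the two easier ranges cleanly and reduces the hard range to a very restricted family of near-extremal configurations, leaving the final gap as the true research question.
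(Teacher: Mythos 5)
The statement you were asked to prove is Wegner's conjecture, and the paper does not prove it: it is stated explicitly as a conjecture from 1977 that is ``still widely open,'' and the paper's actual contributions are much weaker bounds ($\chi_2(G)\leq\Delta+7$, and $\Delta+6$ for $\Delta\geq 10$) restricted to planar graphs of girth at least $5$. So there is no proof in the paper to compare yours against, and your proposal does not supply one either. By your own admission the range $\Delta\geq 8$ is left open (``leaving the final gap as the true research question''), which is already fatal, since the conjecture's hardest and tight case is precisely $\lfloor 3\Delta/2\rfloor+1$ for large $\Delta$; the best known general results remain of the form $\lceil 5\Delta/3\rceil+O(1)$ (Molloy--Salavatipour) or asymptotic $\tfrac{3}{2}\Delta(1+o(1))$, and no discharging scheme is known that closes this gap.

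Beyond that concession, the parts you claim to ``deliver cleanly'' are not actually delivered. The case $4\leq\Delta\leq 7$ with bound $\Delta+5$ is also open for general planar graphs (girth $3$); the only settled case of the conjecture is $\Delta=3$, due to Thomassen and independently Hartke, Jahanbekam and Thomas, and those proofs are long and delicate, far beyond ``Brooks-type extensions to eliminate $K_4$.'' Your sketch names no concrete reducible configurations, no explicit discharging rules, and no verification that final charges are non-negative --- but in a discharging argument that verification \emph{is} the proof; the generic observation that a vertex has at most $d(v)\Delta$ vertices within distance two, together with charge $\mu(v)=d(v)-4$, $\mu(f)=d(f)-4$, gives no control in the triangle-containing planar case, where $2$-neighborhoods can be dense and the greedy extension bound $d(v)+d(v)(\Delta-1)$ vastly exceeds the palette $\Delta+5$. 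In short, the proposal is a research plan, not a proof, and even as a plan it overstates what the standard minimum-counterexample-plus-discharging machinery is known to achieve for this problem.
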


Although the conjecture is still widely open, there are some partial solution: Thomassen \cite{thomassen} (independently by Hartke et al. \cite{hartke}) proved the conjecture for planar graphs with $\Delta = 3$. There are also some upper bounds for $2$-distance chromatic number of planar graphs. For instance, van den Heuvel and McGuinness \cite{van-den} showed that $\chi_2(G) \leq 2\Delta + 25$, while the bound $\chi_2(G) \leq \ceil[\big]{ \frac{5\D}{3}}+78$ was proved by Molloy and Salavatipour \cite{molloy}.\medskip

For  planar graphs  with girth restriction, La and Montassier presented a nice summary of the latest known results in \cite{la-mont-2022}. For instance, if $G$ is a planar graph with $g \geq  6$,  Bu and Zhu \cite{bu-zu} showed that $\chi_2(G) \leq \Delta + 5$,  which confirms the Conjecture \ref{conj:main} for the planar graphs with girth six. 
On the other hand, La \cite{la-2021} prove that $\chi_2(G) \leq \Delta + 3$ if either $g\geq 7$ and $\D\geq 6$ or $g\geq 8$ and $\D\geq 4$. \medskip

When $G$ is a planar graph with girth $5$, Dong and Lin \cite{dong-lin-2016} prove the following.

\begin{thm}\label{thm:8}\cite{dong-lin-2016}
If $G$ is a planar graph with  $g\geq 5$, then $\chi_2(G) \leq \Delta + 8$. 
\end{thm}

Later on, the same authors improve their result in \cite{dong-lin-2017} when $\Delta\notin \{7,8\}$.

\begin{thm}\label{thm:notin-7-8}\cite{dong-lin-2017}
If $G$ is a planar graph with  $g\geq 5$ and $\Delta\notin \{7,8\}$, then $\chi_2(G) \leq \Delta + 7$. 
\end{thm}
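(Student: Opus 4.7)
The plan is a discharging argument applied to a minimum counterexample. Fix $\Delta\notin\{7,8\}$ and suppose, for contradiction, that there exists a planar graph $G$ of girth at least $5$ with $\Delta(G)\leq\Delta$ and $\chi_2(G)>\Delta+7$; among all such $G$, choose one minimizing $|V(G)|+|E(G)|$. The first task is to derive a list of reducible configurations that cannot occur in $G$. The prototype argument is that, for a vertex $v$ of degree $d$, the minimality of $G$ gives a $(\Delta+7)$-coloring of $G-v$; the set of colors forbidden at $v$ has size at most $d+\sum_{u\in N(v)}(d(u)-1)$, and whenever this quantity is at most $\Delta+6$, the coloring extends to $v$. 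This already forces $\delta(G)\geq 2$ and imposes strong restrictions on the degrees of the neighbors of $2$- and $3$-vertices. More elaborate reductions, obtained by simultaneously uncoloring several vertices on a common $5$-face and applying a Hall-type matching argument to the resulting lists of available colors, should forbid a family of local configurations around the (unavoidable) $5$-faces of $G$.

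Next I would set up discharging via Euler's formula. Since $g(G)\geq 5$, assigning initial charges $\mu(v)=3d(v)-10$ to each vertex and $\mu(f)=2d(f)-10$ to each face yields
\[
\sum_{x\in V(G)\cup F(G)}\mu(x)=-20,
\]
and $\mu(f)\geq 0$ for every face, so only vertices of degree at most $3$ start with negative charge. I would then design a small set of discharging rules transferring charge from vertices of degree close to $\Delta$ to vertices of degree at most $3$, typically routed through their common $5$-faces. The rules are to be tuned so that, together with the reducible configurations of the previous step, every vertex and every face finishes with non-negative charge, which contradicts the total $-20$.

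The main obstacle is the first step: identifying exactly the right list of reducible configurations. The forbidden configurations must be numerous enough to rule out all the bad charge patterns that appear in the discharging analysis, yet each one must be established by a delicate extend-a-coloring argument, usually reducing to a bipartite list-assignment problem solved via Hall's theorem. The restriction $\Delta\notin\{7,8\}$ reflects the fact that in this intermediate range the surplus charge available from a $\Delta$-vertex is just barely insufficient to absorb certain bad $5$-faces using only the easy reductions, so that either additional reducible configurations or a refined discharging scheme is required; removing this exception (and pushing the bound further for $\Delta\geq 10$) is precisely what the present paper proposes to do.
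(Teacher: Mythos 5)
The paper contains no proof of this statement: Theorem \ref{thm:notin-7-8} is quoted from Dong and Lin (2017), and the whole point of the paper's own Theorem \ref{thm:main} is to remove the hypothesis $\Delta\notin\{7,8\}$; its proof invokes Theorem \ref{thm:notin-7-8} to reduce to the cases $7\le\Delta\le 8$, which are then handled by a long discharging argument. So there is no internal proof to compare against, and your attempt must stand on its own.

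Judged on its own, it is a plan rather than a proof, and the gap is the one you yourself flag: everything that makes the theorem true is deferred. You correctly set up the minimal counterexample, the elementary extension bound (a vertex $v$ extends whenever $D(v)=\sum_{u\in N(v)}d(u)\le\Delta+6$), and a valid charge assignment ($\mu(v)=3d(v)-10$, $\mu(f)=2\ell(f)-10$, total $-20$, faces nonnegative since $g\ge5$). But you never produce the list of reducible configurations, never state a single discharging rule, and never verify final charges; the phrases ``should forbid a family of local configurations'' and ``rules are to be tuned'' stand in for the entire mathematical content. Experience with this problem (and the visible machinery of the present paper) shows that the easy single-vertex reduction is nowhere near sufficient: one needs iterated-recoloring lemmas in the style of Lemma \ref{lem:non-heavy}, Lemma \ref{lem:light-heavy}, Lemma \ref{lem:7-heavy} and Lemma \ref{lem:DplusS} (the light/heavy/expendable formalism), plus face-to-vertex transfers controlled by statements like Corollary \ref{cor:poor-path}, and then a degree-by-degree verification running many pages. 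Your suggestion that the reductions route through $5$-faces via Hall-type matchings is not obviously wrong, but it is not developed, and your explanation of why $\Delta\in\{7,8\}$ is excluded is speculation rather than an identified obstruction. As it stands the proposal would not establish the theorem; it only describes the genre of argument in which a proof would be written.
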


One of our main results is the following, which improves upon the result in Theorem \ref{thm:8} and extends the result presented in Theorem \ref{thm:notin-7-8}.

\begin{thm}\label{thm:main}
If $G$ is a planar graph with  $g\geq 5$, then $\chi_2(G) \leq \Delta + 7$. 
\end{thm}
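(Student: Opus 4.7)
My plan is to use a minimum counterexample argument combined with a discharging analysis. By Theorem \ref{thm:notin-7-8}, only the cases $\Delta \in \{7,8\}$ remain, so fix such a $\Delta$, set $k = \Delta + 7$, and let $G$ be a counterexample minimizing $|V(G)| + |E(G)|$. Combining the minimality with Theorem \ref{thm:notin-7-8}, every proper subgraph $G' \subsetneq G$ satisfies $\chi_2(G') \leq \Delta(G') + 7 \leq k$, and the standard pendant argument forces $\delta(G) \geq 2$.

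The first phase is to establish a library of reducible configurations. For each candidate configuration one removes a small vertex set $S$, $2$-distance colors $G \setminus S$ by minimality, and then extends the coloring to $S$. A vertex $v$ can be recolored as long as the colors forbidden on $N_{\leq 2}(v)$ do not exhaust the $k$-palette; since girth five rules out common neighbors between distinct neighbors of $v$, we have $|N_{\leq 2}(v) \setminus \{v\}| \leq d(v)\Delta$, so low-degree vertices leave slack. I would aim to prove statements like: every $2$-vertex is adjacent only to near-$\Delta$-degree vertices; a $3$-vertex cannot be close to too many other small-degree vertices or incident to too many $5$-faces; a $\Delta$-vertex cannot host many $2$-neighbors; and various multi-vertex patterns along $5$-faces are forbidden. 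When a naive extension fails by a small margin, the rescue is typically a Kempe-chain swap or a Hall-type matching of uncolored vertices against admissible colors.

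The second phase is discharging. From Euler's formula, in a planar graph with $g \geq 5$ one has
\[
\sum_{v \in V(G)} (d(v) - 4) + \sum_{f \in F(G)} (\ell(f) - 4) = -8.
\]
Assign the initial charge $d(v) - 4$ to each vertex and $\ell(f) - 4$ to each face. Only $2$-vertices (charge $-2$) and $3$-vertices (charge $-1$) are in deficit; every face contributes at least $+1$, and every vertex of degree $\geq 5$ contributes at least $+1$. I would introduce transfer rules sending charge from the high-degree vertices and the long faces to the deficit $2$- and $3$-vertices, possibly via intermediate $4$-vertices, designed so that, once the reducible configurations are excluded, every element ends with non-negative charge. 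That contradicts the global sum $-8$.

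The main obstacle, I expect, is the case $\Delta = 7$: with only $k = 14$ colors but up to $3\Delta = 21$ vertices in the distance-two ball of a $3$-vertex, the straightforward extension arguments that succeed for $\Delta \geq 9$ no longer do, and one must rely on carefully tailored recoloring arguments. Correspondingly, the discharging scheme must be tight enough to route charge to $3$-vertices through several hops without overdraining any source. Devising new reducible configurations and sharper discharging rules specifically adapted to $\Delta \in \{7,8\}$, where the Dong--Lin approach falls short, is where I expect the bulk of the technical work to lie.
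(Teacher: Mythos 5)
Your outline reproduces the paper's high-level framework (minimal counterexample, reduction to $\Delta\in\{7,8\}$ via Theorem \ref{thm:notin-7-8}, reducible configurations, discharging), but as it stands it is a plan rather than a proof: neither the reducible configurations nor the discharging rules are actually specified or verified, and the one place where you acknowledge the real difficulty --- extending a partial coloring when $\Delta\in\{7,8\}$ and only $\Delta+7$ colors are available --- is exactly where you offer nothing concrete. Saying that a failed greedy extension is ``typically'' rescued by a Kempe-chain swap or a Hall-type matching does not establish any configuration as reducible; with $|N_2(v)|$ as large as $3\Delta$ against $\Delta+7\le 15$ colors, single-vertex slack arguments fail badly, and it is not at all routine that Kempe chains survive the distance-two constraint (a color class in a $2$-distance coloring is not governed by a two-colored subgraph in the usual way, so the standard swap arguments do not transfer). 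The paper's substitute for this is its central technical contribution, which your proposal lacks: the machinery of $k$-expendable, $k$-light and $k$-heavy vertices (the degree-sum thresholds $D(v)<\Delta+k+e_k(v)$), together with Lemma \ref{lem:non-heavy}, Corollary \ref{cor:light-ext}, Lemma \ref{lem:light-heavy}, Lemma \ref{lem:7-heavy} and Lemma \ref{lem:DplusS}. These allow one to decolor an entire set of light vertices within distance two and recolor them in a controlled order, and they are what make statements such as ``every neighbour of a light vertex is heavy'' available as structural facts for the discharging phase.

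The discharging half has the same status. Your charge assignment $d(v)-4$, $\ell(f)-4$ (total $-8$) is a legitimate Euler-based starting point, but without transfer rules and a case-by-case verification of $\mu^*\ge 0$ there is no contradiction; the paper's choice $\frac{3d(v)}{2}-5$, $\ell(f)-5$ is tuned so that faces are nearly neutral and the surplus sits on vertices, and even then the verification needs face-to-vertex transfers through the notion of $f$-poor vertices (Corollary \ref{cor:poor-path} and rules R8--R9) to handle $7$- and $8$-vertices loaded with $2$-neighbours. None of this, nor any equivalent, appears in your proposal, so the argument as written does not prove the theorem; it identifies the right scaffolding and correctly predicts where the work lies, but the work itself --- the reducibility lemmas adapted to $\Delta\in\{7,8\}$ and a complete, checked discharging scheme --- is missing.
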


We also improve the bound in Theorem \ref{thm:notin-7-8} by reducing one when $\D\geq 10$, which  further enhances a result presented in \cite{bu-shang}.

\begin{thm}\label{thm:main2}
If $G$ is a planar graph with  $g\geq 5$ and $\D\geq 10$, then $\chi_2(G) \leq \Delta + 6$. 
\end{thm}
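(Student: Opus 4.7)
The plan is to argue by contradiction via the standard reducible-configurations / discharging framework for $2$-distance coloring. Suppose $G$ is a counterexample to Theorem~\ref{thm:main2} minimizing $|V(G)|+|E(G)|$, so that $G$ is planar with $g(G)\geq 5$ and $\D(G)\geq 10$, $G$ admits no $2$-distance $(\D+6)$-coloring, but every proper subgraph of $G$ does.

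First I would compile a list of reducible local configurations. The basic tool is a greedy extension: if $v$ is a $k$-vertex with neighbors $u_1,\ldots,u_k$ of degrees $d_1,\ldots,d_k$, then since $g\geq 5$ the closed second neighborhood of $v$ is precisely $\{v,u_1,\ldots,u_k\}\cup\bigcup_i(N(u_i)\setminus\{v\})$, with $\sum_{i=1}^{k}d_i$ vertices other than $v$, all distinct by the absence of $3$- and $4$-cycles. Consequently, after deleting $v$ and applying minimality, a color can always be chosen for $v$ whenever $\sum_{i=1}^{k}d_i\leq\D+5$. This immediately excludes all $1$-vertices, excludes $2$-vertices whose two neighbors have degree-sum at most $\D+5$, and restricts $3$-vertices; because the budget is tighter here than for Theorem~\ref{thm:main} by exactly one color, the hypothesis $\D\geq 10$ is used precisely to push several borderline configurations into the reducible list. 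More subtle forbidden structures arise from short $2$-threads and from certain edge deletions, where the uncolored remainder is handled by a Hall-type / list-coloring argument on a path.

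Next I would invoke Euler's formula. Assign initial charges $\mu(v)=3d(v)-10$ to each $v\in V(G)$ and $\mu(f)=2d(f)-10$ to each $f\in F(G)$. Using $\sum_v d(v)=\sum_f d(f)=2|E|$ together with $|V|-|E|+|F|=2$, the total charge equals
\[
\sum_{v\in V(G)}\mu(v)+\sum_{f\in F(G)}\mu(f)=-20.
\]
Since $g\geq 5$, every face has $\mu(f)\geq 0$, and only $2$- and $3$-vertices are in deficit (with charges $-4$ and $-1$), while every $k$-vertex with $k\geq 4$ carries surplus $3k-10$; in particular each $\D$-vertex has charge at least $20$ thanks to $\D\geq 10$. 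I would then design redistribution rules that send charge from faces of size $\geq 6$ and from vertices of degree $\geq 4$ to nearby low-degree vertices, at rates refined by the local configuration, and use the reducible configurations from the previous step to bound the number of low-degree neighbors demanding a share. The verification then proceeds by a case analysis showing every element ends with non-negative charge, contradicting the total $-20$.

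The main obstacle I anticipate is the interplay of $2$-threads with $3$-vertices whose neighborhoods consist of medium-degree vertices: these are exactly the patterns on which the saved color (compared with Theorem~\ref{thm:main}) bites hardest, and the reducibility arguments there must be sharpened to exploit the full strength of $\D\geq 10$. Keeping simultaneously the tight $5$-faces (which carry zero excess) and these constrained local configurations non-negative under a single system of rules is the delicate calibration on which the whole proof turns.
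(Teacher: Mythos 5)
Your outline stays at the level of a plan, and the two concrete tools you do commit to are not strong enough to carry it out; this is where the genuine gap lies. First, your only reducibility mechanism is greedy extension: delete $v$, color the rest by minimality, and extend whenever $D(v)=\sum_i d_i\leq \Delta+5$. The paper's proof needs substantially more, namely a recoloring cascade built on the notions of \emph{expendable}, \emph{light} and \emph{heavy} vertices (Lemma \ref{lem:non-heavy}, Corollary \ref{cor:light-ext}, Lemmas \ref{lem:light-heavy}--\ref{lem:DplusS}): one may uncolor all light vertices within distance two, color $v$, and then re-color the light vertices in a suitable order, so that a vertex is reducible even when $D(v)\geq \Delta+6$, provided $D(v)<\Delta+6+e_6(v)$ (and the sharper bound of Lemma \ref{lem:DplusS}). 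These cascades are what force, e.g., every neighbour of a light vertex to be heavy, and hence force the second endpoints of the $2$-threads around a $(6\text{-}9)$-vertex to have large degree; the discharging verification leans on exactly these forced high-degree weak neighbours. Your vague appeal to ``a Hall-type / list-coloring argument on a path'' does not supply this, because the problematic configurations are not paths of uncolored vertices but stars of $2$-threads around a medium-degree center whose own degree sum exceeds the greedy budget.

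Second, your discharging scheme routes charge only from $4^+$-vertices and from faces of length at least $6$ to low-degree vertices. Since girth $5$ allows every face to be a $5$-face (initial charge $0$ in your normalization), this cannot rescue a vertex of degree $7$, $8$ or $9$ most of whose neighbours are $2$-vertices: such a vertex has surplus $3k-10$ but owes $2$ (your scale) to each of up to $k$ neighbours, and no $6^+$-face need exist nearby. The paper resolves precisely this by a two-step routing that your rules do not contain: high-degree vertices ($7^+$, and especially the $9^+$- and $10^+$-vertices forced by the light/heavy lemmas to sit at the far ends of the $2$-threads) first donate to the incident faces, including $5$-faces (rules R9--R12 of Section \ref{sub:2}), and each face then forwards its acquired positive charge to its incident poor vertices (rule R13, controlled by Corollary \ref{cor:poor-path2}). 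You correctly identify the tension with tight $5$-faces as ``the delicate calibration,'' but you do not propose the mechanism that resolves it, and without both the recoloring machinery and the vertex-to-face-to-poor-vertex transfer the case analysis you envisage cannot close.
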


\section{Structural Properties of Critical Graphs}

Given a planar  graph $G$, we denote by $\ell(f)$ the length of a face $f$ and by $d(v)$ the degree of a vertex $v$. 
A $k$-vertex is a vertex of degree $k$. A $k^{-}$-vertex is a vertex of degree at most $k$, and a $k^{+}$-vertex is a vertex of degree at least $k$. A $k$ ($k^-$ or $k^+$)-face and $k$ ($k^-$ or $k^+$)-neighbour is defined similarly. A $k(d)$-vertex is a $k$-vertex adjacent to $d$ $2$-vertices. 
When $r\leq d(v)\leq s$, the vertex $v$ is said to be $(r\text{-}s)$-vertex.

For a vertex $v\in V(G)$, we use $n_i(v)$ (resp. $n_2^k(v)$) to denote the number of $i$-vertices (resp.   $2$-vertices having a $k$-neighbour) adjacent to $v$. Let $v\in V(G)$, we define $D(v)=\Sigma_{v_i\in N(v)}d(v_i)$. 
We denote by $d(u,v)$ the distance between $u$ and $v$ for any pair $u,v\in V(G)$. Also, we set $N_i(v)=\{u\in V(G) \ | \  1 \leq d(u,v) \leq i \}$ for $i\geq 1$, and clearly $N_1(v)=N(v)$. 
If we have a path $uvw$ in a graph $G$ with $d_G(v)=2$, we say that $u$ and $w$ are \emph{weak-adjacent}. A pair of weak-adjacent vertices is said to be \emph{weak neighbour} of each other. The neighbours of a $3(1)$-vertex different from a $2$-vertex are called \emph{star-adjacent}. 
Obviously, every $2$-vertex $v$ having a $k^-$-neighbour has the property that $D(v)\leq \D+k$. \medskip

We call a vertex $v$ as \emph{$k$-expendable} if $D(v)<\Delta+k+\sum_{i=3}^{k-1} n_2^i(v)$ for $k\geq 2$. Denote by $e_k(v)$ the number of $k$-expendable vertices of distance at most $2$ from $v$. If $v$ is a vertex with $D(v)<\D+k+e_k(v)$, then it is called \emph{$k$-light},  otherwise it is called \emph{$k$-heavy}. Notice that every $2$-vertex adjacent to a $t$-vertex for $t\leq k-1$ is a $k$-expendable vertex, so $\sum_{i=3}^{k-1} n_2^i(v)\leq e_k(v)$ for any vertex $v$. In particular, every $k$-expendable vertex is a $k$-light vertex.

A graph $G$ is called \emph{$t$-critical}  if $G$ does not admit any $2$-distance coloring using $t$ colors, but any proper subgraph of $G$ does. It follows that if $G$ is $t$-critical then $G-x$ has a 2-distance coloring with $t$ colors for every $x\in V(G)\cup E(G)$.

We introduce a special coloring as follows: Given a graph $G=(V,E)$, a set $T\subseteq V$ and a positive integer $\ell$, a function $f:T \rightarrow [\ell]$ is a \emph{partial $2$-distance $\ell$-coloring} of $G$ if $f(u) \neq f(v)$ for all $u,v\in T$ with $d_G(u,v)\leq 2$. The vertices in $T$ are called \emph{colored} while  the vertices in $V\setminus T$ are called \emph{uncolored}. If a partial $2$-distance  $\ell$-coloring $f$ of a graph $G=(V,E)$ has domain $V$ (i.e., $T=V$) then it corresponds to a $2$-distance $\ell$-coloring of $G$.  \medskip

Let us now present some structural results that will be the main tools in the proofs of Theorem \ref{thm:main} and Theorem \ref{thm:main2}. The proofs of the following lemmas are quite similar. We start with a  partial $2$-distance $(\Delta+k)$-coloring of a graph $G=(V,E)$ whose colored vertex set is $W\subset V$. If $V-W$ consists of $k$-light vertices, we split $V-W$ into three groups $S_1, S_2,$ and $ S_3$. Here, $S_3$ represents the set of $2$-vertices having a $(k-1)^-$-neighbour, $S_2$ is the set $k$-expendable vertices not belonging to $S_3$, and  $S_1$ is the set $k$-light vertices not belonging to $S_2\cup S_3$. We assign an available color to each vertex in $S_1,S_2$, and $S_3$ (in this ordering). Consequently, we obtain a $2$-distance $(\Delta+k)$-coloring of the graph $G$.

\begin{lem}\label{lem:non-heavy}
Let $G=(V,E)$ be a graph with a partial $2$-distance $(\Delta+k)$-coloring whose colored vertex set is $W\subset V$ and let $v\in V\setminus W$ be a vertex satisfying $D(v)<\D+k+e_k(v)$. Then there exists a partial 2-distance $(\D+k)$-coloring of $G$ whose colored vertex set is $W\cup \{v\}$.
\end{lem}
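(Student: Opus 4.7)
The proof is a direct greedy argument: I would show that the number of colors forbidden to $v$ in the current partial coloring is strictly less than $\Delta+k$, so some color in $[\Delta+k]$ remains available, and assigning it to $v$ yields a valid extension with colored set $W\cup\{v\}$.

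First, I would establish the standard double-count $|N_2(v)|\le d(v)+\sum_{u\in N(v)}(d(u)-1)=D(v)$, since every vertex at distance exactly two from $v$ lies in $N(u)\setminus\{v\}$ for some neighbor $u$ of $v$. Each colored vertex in $N_2(v)$ contributes at most one forbidden color at $v$, so the number of colors forbidden to $v$ is at most $|N_2(v)\cap W|\le D(v)$.

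Next, I would invoke the algorithmic convention stated in the paragraph immediately before the lemma: the uncolored set $V\setminus W$ consists of $k$-light vertices, and every $k$-expendable vertex is $k$-light (this is explicitly noted in the definitions, using $\sum_{i=3}^{k-1}n_2^i(v)\le e_k(v)$). Therefore, each of the $e_k(v)$ $k$-expendable vertices within distance two of $v$ already lies in $V\setminus W$ and contributes nothing to the forbidden palette at $v$. Subtracting, the number of colors forbidden to $v$ is at most $D(v)-e_k(v)$, which is strictly less than $\Delta+k$ by hypothesis. Hence some color in $[\Delta+k]$ is free; assigning it to $v$ produces the desired partial $2$-distance $(\Delta+k)$-coloring with colored set $W\cup\{v\}$.

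The only delicate point is the justification that the $k$-expendable vertices within distance two of $v$ are indeed uncolored; this relies on the convention that the algorithm first colors $W$ and then processes the $k$-light vertices of $V\setminus W$ in the order $S_1,S_2,S_3$, so that any $k$-expendable vertex in $N_2(v)$ belongs to $S_2\cup S_3$ and is handled strictly after $v$. Beyond making this counting precise, I do not anticipate any serious obstacle: the content of the lemma is really the observation that the $e_k(v)$ term in the definition of ``$k$-light'' precisely compensates for the potential deficit between $D(v)$ and $\Delta+k-1$.
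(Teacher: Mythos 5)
There is a genuine gap. Your count of forbidden colors at $v$ only works if the $e_k(v)$ $k$-expendable vertices within distance two of $v$ are uncolored, and nothing in the hypotheses of the lemma guarantees this: the lemma is stated for an arbitrary partial $2$-distance $(\Delta+k)$-coloring with colored set $W\subset V$. The paragraph before the lemma about splitting $V\setminus W$ into $S_1,S_2,S_3$ is an informal outline of how the lemmas will later be used, not an additional hypothesis; and in the actual applications (the proofs of Lemmas \ref{lem:light-heavy}, \ref{lem:7-heavy} and \ref{lem:DplusS}) the partial coloring arises from a full coloring of $G-uv$ by decoloring a specified set, so expendable vertices of $N_2(v)$ can perfectly well lie in $W$. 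Moreover, even granting the convention that every vertex of $V\setminus W$ is $k$-light, your inference runs the wrong way: from ``uncolored $\Rightarrow$ light'' and ``expendable $\Rightarrow$ light'' it does not follow that ``expendable $\Rightarrow$ uncolored''. Without that, your bound is only $|N_2(v)\cap W|\le D(v)$, and $D(v)$ may be as large as $\Delta+k+e_k(v)-1\ge \Delta+k$, so all $\Delta+k$ colors can be blocked at $v$ and a pure greedy extension that leaves $W$ untouched is in general impossible. This is precisely why the lemma only asserts the existence of \emph{some} partial coloring with colored set $W\cup\{v\}$, not an extension of the given one.

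The missing idea, which is the actual content of the paper's proof, is a decolor-and-recolor step. The paper decolors all $k$-light vertices within distance two of $v$ together with the $2$-vertices having a $(k-1)^-$-neighbour; after this the $e_k(v)$ expendable vertices of $N_2(v)$ are uncolored by construction, so $v$ sees at most $D(v)-e_k(v)\le\Delta+k-1$ colors and can be colored. It then recolors the decolored vertices in a specific order (light-but-not-expendable vertices first, then expendable vertices, then the special $2$-vertices), using at each step the recolored vertex's own deficit $D(z)<\Delta+k+e_k(z)$, respectively $D(z)\le\Delta+k-1$ for the vertices of $T$, to find an available color. Your proposal omits this mechanism entirely, so as written it establishes the lemma only under an unstated (and, in the applications, false) assumption on $W$.
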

\begin{proof}

Consider a partial 2-distance $(\D+k)$-coloring $f$ of $G$ such that its colored vertex set is $W\subset V$, let $v\in V\setminus W$ be a vertex satisfying $D(v)<\D+k+e_k(v)$. We denote by $R$ the set of $k$-expendable vertices of distance at most $2$ from $v$, and let $T$ be the set of all $2$-vertices having a $(k-1)^-$-neighbour. Recall that every $2$-vertex adjacent to a $t$-vertex for $t\leq k-1$ is a $k$-expendable, so we have $|R|=e_k(v)$ and
$(T\cap N_2(v)) \subseteq R$.

We will construct a new partial 2-distance $(\D+k)$-coloring $f'$ of $G$ such that its colored vertex set is $W\cup \{v\}$. Let the vertices in $W\setminus (R \cup T)$ keep their colors with respect to $f$ and decolor the vertices in $R \cup T$ so that we will later assign them appropriate colors. 
Now $v$ has at most $\D + k-1$ forbidden colors (i.e. the colors used in $N_2(v)$) as $D(v) < \D + k + e_k(v)$, so we assign $v$ with an available color. Our next aim is to recolor every vertex in $R \cap W$. Notice that every vertex $z \in (R\setminus T) \cap W$ has at most $\D + k-1$ forbidden colors since the vertices in $T$ are decolored yet and $D(z) < \D + k + \sum_{i=3}^{k-1} n_2^i(z)$. Thereby, we can recolor all vertices in $(R\setminus T) \cap W$ with some available colors. Next we recolor all vertices in $T \cap W$ with available colors. Thus we obtain a partial 2-distance $(\D + k)$-coloring of $G$ whose colored set is $W \cup \{v\}$. 
\end{proof}

The following is an immediate consequence of Lemma \ref{lem:non-heavy} and  provides an inductive way to recolor $k$-light vertices in a partial $2$-distance $(\Delta+k)$-coloring of a graph $G$.

\begin{cor}\label{cor:light-ext}
Let $G=(V,E)$ be a graph with a partial $2$-distance $(\Delta+k)$-coloring whose colored vertex set is $W\subset V$. If $v\in V\setminus W$  is a $k$-light vertex in $G$, then there exists a partial 2-distance $(\D+k)$-coloring of $G$ whose colored vertex set is $W\cup \{v\}$.
\end{cor}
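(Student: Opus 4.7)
The plan is essentially trivial, because the corollary is just Lemma~\ref{lem:non-heavy} with the hypothesis repackaged in terms of the already-introduced notion of a $k$-light vertex. So my ``proof'' is really a definition-unfolding step followed by a single invocation of the preceding lemma.

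First I would recall the definition given just before Lemma~\ref{lem:non-heavy}: a vertex $v$ is called \emph{$k$-light} precisely when $D(v) < \Delta + k + e_k(v)$. Hence the hypothesis that $v\in V\setminus W$ is $k$-light is exactly the hypothesis appearing in Lemma~\ref{lem:non-heavy}, namely that $v$ is an uncolored vertex with $D(v) < \Delta + k + e_k(v)$.

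Next, I would apply Lemma~\ref{lem:non-heavy} verbatim to the given partial $2$-distance $(\Delta+k)$-coloring (with colored vertex set $W$) and to the vertex $v$. The lemma immediately yields a partial $2$-distance $(\Delta+k)$-coloring of $G$ whose colored vertex set is $W\cup\{v\}$, which is exactly the statement of the corollary.

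There is no real obstacle here: the substantive content lies entirely inside Lemma~\ref{lem:non-heavy}, namely the decoloring-and-recoloring argument that uses the slack afforded by the $k$-expendable and $2$-vertex neighbours of $v$. The role of the corollary is purely cosmetic: it re-expresses the lemma in the language of $k$-light vertices so that later proofs, in which one iteratively extends a partial coloring to a sequence of $k$-light vertices, can cite a single statement rather than re-verifying the numerical inequality $D(v)<\Delta+k+e_k(v)$ each time.
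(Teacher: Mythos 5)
Your proposal is correct and matches the paper exactly: the paper states the corollary as an immediate consequence of Lemma \ref{lem:non-heavy}, since being $k$-light is by definition the inequality $D(v)<\Delta+k+e_k(v)$ required in that lemma. Nothing further is needed.
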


We will now prove that every neighbour of a $k$-light vertex is a $k$-heavy vertex in a $(\D+k)$-critical graph, which will be useful throughout the paper.

\begin{lem}\label{lem:light-heavy}
If $v$ is a $k$-light vertex in a $(\D+k)$-critical graph $G$, then each neighbour of $v$ is a $k$-heavy vertex.
\end{lem}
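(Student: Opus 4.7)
The plan is a direct proof by contradiction: assume that the $k$-light vertex $v$ has a neighbour $u$ which is itself $k$-light, and then construct a $2$-distance $(\Delta+k)$-coloring of $G$, violating the criticality hypothesis. The whole argument is essentially an iterated application of Corollary~\ref{cor:light-ext}, with a small but crucial check that a coloring of $G-uv$ still behaves correctly with respect to distance in $G$.

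Since $G$ is $(\Delta+k)$-critical and $uv\in E(G)$, the proper subgraph $G-uv$ admits a $2$-distance $(\Delta+k)$-coloring $f$. Set $W = V\setminus\{u,v\}$ and consider the restriction $f|_W$. Although $f$ is a priori only valid for $G-uv$, I claim $f|_W$ is already a partial $2$-distance $(\Delta+k)$-coloring of $G$. Indeed, suppose $x,y\in W$ satisfy $d_G(x,y)\leq 2$. If $xy\in E(G)$ then the edge $xy$ is not the edge $uv$ (since neither endpoint of $xy$ lies in $\{u,v\}$), so $xy\in E(G-uv)$ and hence $f(x)\neq f(y)$. If instead $x$ and $y$ share a common neighbour $z$ in $G$, then both edges $xz$ and $yz$ differ from $uv$ (again since $x,y\notin\{u,v\}$), so $z$ is still a common neighbour of $x$ and $y$ in $G-uv$, and again $f(x)\neq f(y)$. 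Thus $f|_W$ is a partial $2$-distance $(\Delta+k)$-coloring of $G$ whose uncolored set is exactly $\{u,v\}$.

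From here I invoke Corollary~\ref{cor:light-ext} twice inside the graph $G$. Since $u$ is $k$-light in $G$, the corollary upgrades $f|_W$ to a partial $2$-distance $(\Delta+k)$-coloring of $G$ with colored set $W\cup\{u\}=V\setminus\{v\}$. Since $v$ is also $k$-light in $G$, a second application of the corollary extends this to a coloring whose colored set is $V\setminus\{v\}\cup\{v\}=V$, that is, a genuine $2$-distance $(\Delta+k)$-coloring of $G$. This contradicts the fact that $G$ is $(\Delta+k)$-critical, completing the proof.

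The main (and essentially only) obstacle is the distance verification in the middle paragraph: I must rule out that deleting the edge $uv$ creates a pair $x,y\in W$ with $d_G(x,y)\leq 2<d_{G-uv}(x,y)$, since otherwise $f$ might fail the distance-$2$ constraint inherited from $G$. Once this is clear, the lemma reduces to two straightforward invocations of Corollary~\ref{cor:light-ext}, exploiting the assumed $k$-lightness of both $u$ and $v$.
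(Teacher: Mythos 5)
Your proof is correct and follows essentially the same route as the paper: invoke criticality to obtain a $2$-distance $(\Delta+k)$-coloring of $G-uv$, then recolor using the extension machinery of Corollary~\ref{cor:light-ext}. The only difference is cosmetic — the paper decolors the light vertices near $u$ and recolors $u$ by hand (in effect re-running the corollary's argument) before applying the corollary to the rest, whereas you apply the corollary directly to $u$ and then to $v$, and you make explicit the (correct) verification, left implicit in the paper, that the coloring of $G-uv$ restricted to $V\setminus\{u,v\}$ is still a valid partial $2$-distance coloring of $G$.
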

\begin{proof}
Let $v$ be a $k$-light vertex. Assume to the contrary that there exists a $k$-light vertex $u\in N(v)$ with $D(u)< \D+k+e_k(u)$. Since $G$ is $(\D+k)$-critical, the graph  $G-uv$ has a 2-distance coloring $f$ with $\D+k$ colors. We decolor the vertices $u$ and $v$, and thereby $G$ has a partial $2$-distance $(\Delta+k)$-coloring $f'$ whose colored vertex set is $V(G)\setminus \{u,v\}$.  Consider the vertex $u$. As it is a $k$-light vertex in $G$,  there exists a partial 2-distance $(\D+k)$-coloring $f''$ of $G$ whose colored vertex set is $V(G)\setminus \{v\}$ by applying Corollary \ref{cor:light-ext}. Finally, we again apply Corollary \ref{cor:light-ext} for the vertex $v$ so that we obtain a proper 2-distance coloring of $G$ with  $\D+k$ colors, a contradiction.
\end{proof}

We point out that some vertices having $2$- or $3(1)$-neighbour in a $(\D+k)$-critical graph $G$ are always $k$-heavy as follows.

\begin{lem}\label{lem:7-heavy}
Let $G$ be a $(\D+k)$-critical graph for $k\geq 4$. Every $k^-$-vertex having a $2$-neighbour is $k$-heavy. In particular, every $(k-1)$-vertex having a $3(1)$-neighbour is $k$-heavy as well.
\end{lem}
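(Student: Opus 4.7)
The plan is to prove both assertions by contradiction, in each case constructing a $2$-distance $(\D+k)$-coloring of $G$ starting from a coloring of $G-u$ guaranteed by criticality. For the first assertion, suppose $v$ is a $k^-$-vertex with a $2$-neighbour $u$, and assume for contradiction that $v$ is $k$-light. Let $w$ denote the other neighbour of $u$. The key inequality is $D(u)=d(v)+d(w)\le k+\D$, which holds because $u$ has degree $2$ and $d(v)\le k$; in particular $|N_2^G(u)|\le D(u)\le k+\D$.

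I will take a $2$-distance $(\D+k)$-coloring $f$ of $G-u$, view it as a partial coloring of $G$ with $u$ uncolored, and additionally decolor $v$. The colors forbidden for $u$ come from the currently colored vertices of $N_2^G(u)\setminus\{u,v\}$, which number at most $|N_2^G(u)|-1\le D(u)-1\le \D+k-1$, so an available color for $u$ exists; assign it. Only $v$ is now uncolored, and $v$ is $k$-light by hypothesis, so Corollary \ref{cor:light-ext} extends the coloring to $v$. This produces a $2$-distance $(\D+k)$-coloring of $G$, contradicting the criticality of $G$; hence $v$ must be $k$-heavy.

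For the second assertion, let $v$ be a $(k-1)$-vertex with a $3(1)$-neighbour $u$ and write $N(u)=\{v,x,y\}$ with $d(x)=2$. Since $x$ is a $2$-vertex adjacent to the $3$-vertex $u$ and $3\le k-1$, the observation following the definition of $k$-expendability gives that $x$ is $k$-expendable and hence $k$-light. Assuming $v$ is $k$-light for contradiction, one again starts from a $2$-distance $(\D+k)$-coloring of $G-u$ and decolors both $v$ and $x$. The forbidden colors for $u$ then number at most $|N_2^G(u)|-2\le D(u)-2=(k-1)+2+d(y)-2\le \D+k-1$, so $u$ can be colored. Applying Corollary \ref{cor:light-ext} first to $x$ and then to $v$ restores a full $2$-distance $(\D+k)$-coloring of $G$, again contradicting criticality.

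The principal difficulty is the bookkeeping of forbidden colors through each decoloring and re-extension step and verifying that the auxiliary decolorings performed inside the proof of Corollary \ref{cor:light-ext} do not disturb the colors just assigned to $u$, $v$, and $x$. These concerns are handled routinely once the bounds $D(u)\le \D+k$ in the first part and $D(u)\le \D+k+1$ together with the $k$-expendability of $x$ in the second part are in hand.
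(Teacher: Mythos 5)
Your proof is correct and takes essentially the same route as the paper: invoke criticality to get a coloring of a smaller graph, decolor the relevant light vertices, color $u$ first using the bound on $D(u)$ (with the uncolored vertices subtracted from the forbidden count), and finish by extending to the remaining $k$-light vertices via Corollary \ref{cor:light-ext}. The only cosmetic differences are that you delete the vertex $u$ rather than the edge $uv$, treat the cases $d(v)\le k-1$ and $d(v)=k$ uniformly (the paper dispatches the former directly through Lemma \ref{lem:light-heavy}), and recolor the $2$-vertex $x$ through the corollary rather than by the paper's direct count of at most $\D+3$ forbidden colors.
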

\begin{proof}
Let $v$ be a $k^-$-vertex having a $2$-neighbour $u$. The case $d(v)\leq k-1$ follows from the fact  that $D(u)\leq \D+d(v)<\D+k$ together with Lemma \ref{lem:light-heavy}. Assume to the contrary  that $v$ is a $k$-light $k$-vertex having a $2$-neighbour $u$. Since $G$ is $(\D+k)$-critical, the graph $G-uv$ has a 2-distance coloring $f$ with $\D+k$ colors.  We first decolor $u$ and $v$. Now, $u$ has at most $\D+k-1$ forbidden colors since $D(u)\leq  \D + d(v)$ and $v$ is uncolored vertex yet, so we recolor $u$ with an available color. In addition,  we can recolor $v$ with an available color by applying Corollary \ref{cor:light-ext} since it is $k$-light vertex. Thus we obtain a proper 2-distance $(\D+k)$-coloring of $G$, a contradiction.

Suppose now that $v$ is a $(k-1)$-vertex having a $3(1)$-neighbour $u$. Let $w$ be $2$-neighbour of $u$.  By contradiction, assume  that  $v$ is a $k$-light vertex. We first decolor $u$, $v$ and $w$. Similarly as above, $u$ has at most $\D+k-1$ forbidden colors, since $v$ and $w$ are uncolored vertex and $D(u)\leq  \D + d(v)+2$, so we recolor $u$ with an available color. In addition,  we can recolor $v$ with an available color by applying Corollary \ref{cor:light-ext} since it is a $k$-light vertex. Finally, we give an available color to $w$ which has at most $\D+3$ forbidden colors with $3\leq k-1$ because it is a $2$-vertex having a $3$-neighbour. Thus we obtain a proper 2-distance $(\D+k)$-coloring of $G$, which leads to a contradiction.
\end{proof}

Given a $(\D+k)$-critical graph $G$. If a vertex $v\in V(G)$ has a $k$-expendable neighbour $u$, then $v$ would be $k$-heavy by Lemma \ref{lem:light-heavy}, and so  $D(v)\geq \D+k+e_k(v)$ with $e_k(v)\geq 1$. We show that this statement holds even if $u$ is a $k$-light vertex.

\begin{lem}\label{lem:DplusS}
Let $G$ be a $(\D+k)$-critical graph and let $v\in V(G)$. Suppose that $S$ is the set of $k$-light vertices in $N_2(v)$, and $S\cap N(v)\neq \emptyset$. If $v$ is a $k$-heavy vertex, then $D(v)\geq \D+k+|S|$.
\end{lem}

\begin{proof}
Let $v\in V(G)$, and let $S$ be the set of $k$-light vertices of distance at most $2$ from $v$. 
Suppose that $v$ is a $k$-heavy vertex. Assume for a contradiction that $D(v)<\D+k+|S|$. Denote by $T$ the set of all $k$-light vertices in $G$. Clearly, $S\subseteq T$.

Since $G$ is $(\D+k)$-critical, the graph $G-uv$ has a 2-distance coloring $f$ with $\D+k$ colors for $u\in S\cap N(v)$. We first decolor all vertices in $T$. Now, the vertex $v$ has at most $\D+k-1$ forbidden colors since $D(v)< \D+k+|S|$ and all vertices in $T$ are decolored. So we recolor $v$ with an available color. In addition,  we can easily recolor all vertices in $T$ with some  available colors by applying Corollary \ref{cor:light-ext} since $T$ consists of $k$-light vertices. Thus, we obtain a proper 2-distance coloring of $G$ with  $\D+k$ colors, a contradiction.
\end{proof}

The following is an easy consequence of Lemma \ref{lem:DplusS} together with Lemma \ref{lem:light-heavy}.

\begin{cor}\label{cor:Dplus1}
Let $G$ be a $(\D+k)$-critical graph. If $v\in V(G)$ has a $k$-light neighbour, then $D(v)\geq \D+k+1$.
\end{cor}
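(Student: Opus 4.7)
The plan is to derive this corollary directly from Lemma \ref{lem:light-heavy} and Lemma \ref{lem:DplusS} by splitting on the type of the given $k$-light neighbour. Let $v \in V(G)$ have a $k$-light neighbour $u$. I would distinguish two cases according to whether $u$ is $k$-expendable or not.

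First, suppose $u$ is $k$-expendable. Since $u \in N(v) \subseteq N_2(v)$ and $u$ is $k$-expendable, we have $e_k(v) \geq 1$. By Lemma \ref{lem:light-heavy}, every neighbour of a $k$-light vertex in a $(\D+k)$-critical graph is $k$-heavy; in particular $v$ is $k$-heavy, meaning $D(v) \geq \D + k + e_k(v) \geq \D + k + 1$, as required.

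Second, suppose $u$ is $k$-light but not $k$-expendable. Then, using the notation of Lemma \ref{lem:DplusS}, the set $S$ of $k$-light neighbours of $v$ that are not $k$-expendable contains $u$, so $|S| \geq 1$. Applying Lemma \ref{lem:DplusS} yields $D(v) \geq \D + k + e_k(v) + |S| \geq \D + k + 0 + 1 = \D + k + 1$.

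Since both cases are exhaustive and give the same bound, the corollary follows. No real obstacle is expected here: the work has already been done in Lemma \ref{lem:DplusS}, and the only subtlety is to handle the (otherwise uncovered) case where the witnessing $k$-light neighbour happens to be $k$-expendable, which is immediately taken care of by the definition of $e_k(v)$ together with Lemma \ref{lem:light-heavy}.
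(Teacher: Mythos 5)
Your proposal is correct and follows exactly the route the paper intends: the expendable case is the remark stated immediately before Lemma \ref{lem:DplusS} (heaviness of $v$ via Lemma \ref{lem:light-heavy} plus $e_k(v)\geq 1$), and the non-expendable case is Lemma \ref{lem:DplusS} with $|S|\geq 1$. Nothing further is needed.
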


\section{The Proof of Theorem \ref{thm:main}} \label{sec:premECE}

\subsection{The Structure of Minimum Counterexample} \label{sub:premECE}~~\medskip

Let  $G$ be a minimal counterexample to Theorem \ref{thm:main} such that $G$ does not admit any $2$-distance coloring with $\D+7$ colors, but any proper subgraph of $G$ does. By the minimality, $G-x$ has a 2-distance coloring with $\D+7$ colors for every $x\in V(G)\cup E(G)$. So $G$ is a $(\D+7)$-critical graph. Obviously,  $G$ is connected and $\delta(G)\geq 2$. On the other hand, it suffices to consider only the case $7 \leq \Delta\leq 8$ due to Theorem \ref{thm:notin-7-8}.

Since $G$ is $(\D+7)$-critical, we have $k=7$ in the definition of $k$-light, $k$-heavy, $k$-expendable. Thus, for simplicity, we abbreviate $7$-light, $7$-heavy, $7$-expendable vertices to light, heavy and expendable vertices throughout this section.

We begin with outlining some structural properties that $G$ must carry, which will be  in use in the sequel. The following can be easily obtained from Lemma \ref{lem:light-heavy} and Corollary \ref{cor:Dplus1}.

\begin{cor}\label{cor:properties}
\begin{itemize}
\item[$(a)$] $G$ has no adjacent $2$-vertices.
\item[$(b)$] Every $2$-vertex having a $k$-neighbour for $k\leq 6$ is light.
\item[$(c)$] If a $3$-vertex has a $2$-neighbour, then its other neighbours are $6^+$-vertices.
\item[$(d)$] A $4$-vertex cannot have three $2$-neighbours.
\item[$(e)$] A $5$-vertex cannot have four $2$-neighbours.
\item[$(f)$] If a vertex $v$ has a light neighbour, then $D(v)\geq \D+8$.
\end{itemize}
\end{cor}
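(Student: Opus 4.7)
My plan is to dispatch each of the six items using the preceding lemmas with $k=7$; by Theorem~\ref{thm:notin-7-8} only $\D\in\{7,8\}$ needs to be considered. Two workhorse facts will drive the whole argument: a vertex $v$ with $D(v)<\D+7$ is automatically light, since the expendability offset $\sum_{i=3}^{6}n_2^i(v)$ is nonnegative; and Corollary~\ref{cor:Dplus1} upgrades the existence of a light neighbour to $D(v)\ge \D+8$.

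Parts (a), (b) and (f) are almost immediate. For (b), a $2$-vertex $v$ with a $k^-$-neighbour ($k\le 6$) satisfies $D(v)\le \D+k\le \D+6<\D+7$, hence is light. For (a), two adjacent $2$-vertices are both light by (b), yet Lemma~\ref{lem:light-heavy} forces each to be heavy, a contradiction. Part (f) is simply Corollary~\ref{cor:Dplus1} applied with $k=7$.

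For (c), let $v$ be a $3$-vertex with $2$-neighbour $u$ and other neighbours $a,b$. Part (b) makes $u$ light, so Corollary~\ref{cor:Dplus1} gives $D(v)\ge \D+8$, i.e.\ $d(a)+d(b)\ge \D+6\ge 13$. Since $d(a),d(b)\le \D\le 8$, a value $\le 5$ on one side would force the other to exceed $\D$; hence both $a,b$ must be $6^+$-vertices. Part (d) is a similar counting argument: if a $4$-vertex $v$ had three $2$-neighbours (each light by (b)) and one further neighbour $w$, then $D(v)=6+d(w)\le \D+6$ would contradict $D(v)\ge \D+8$.

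The most delicate item, and the one I expect to be the main obstacle, is (e). For a $5$-vertex $v$ with four $2$-neighbours $u_1,\ldots,u_4$ and a fifth neighbour $w$, Corollary~\ref{cor:Dplus1} alone only yields $\D+8\ge D(v)\ge \D+8$, which is not a contradiction. To close the gap I would unpack the defining inequality of heaviness: by (a) each $u_i$ has no $2$-neighbour, so $\sum_{j=3}^{6}n_2^j(u_i)=0$, and together with $D(u_i)\le \D+5<\D+7$ this shows each $u_i$ is expendable. Therefore $e_7(v)\ge 4$, and since $v$ is heavy by Lemma~\ref{lem:light-heavy}, the definition of heaviness gives $D(v)\ge \D+7+e_7(v)\ge \D+11$, contradicting $D(v)\le \D+8$.
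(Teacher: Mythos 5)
Your proof is correct and follows essentially the route the paper intends: the paper states Corollary \ref{cor:properties} without proof as an easy consequence of Lemma \ref{lem:light-heavy} and Corollary \ref{cor:Dplus1}, and your verification of (a)--(d) and (f) is exactly that expansion. For (e) you rightly observe that Corollary \ref{cor:Dplus1} alone only gives $D(v)\ge\Delta+8$, which is not yet a contradiction, and you close the gap the natural way, via heaviness of $v$ (Lemma \ref{lem:light-heavy}) plus $e_7(v)\ge 4$ (each $2$-neighbour satisfies $D(u_i)\le\Delta+5<\Delta+7$ and is therefore expendable), yielding $D(v)\ge\Delta+7+e_7(v)\ge\Delta+11>\Delta+8\ge D(v)$; the aside that $\sum_{j=3}^{6}n_2^j(u_i)=0$ is unnecessary, since the sum is nonnegative anyway, but harmless.
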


As $k=7$, we may rewrite Lemma \ref{lem:7-heavy} as follows.

\begin{cor}\label{cor:7-heavy}
Every $7^-$-vertex having a $2$-neighbour is heavy. In particular, every $6$-vertex having a $3(1)$-neighbour is heavy as well.
\end{cor}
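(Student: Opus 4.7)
The plan is essentially to observe that Corollary \ref{cor:7-heavy} is a direct rewording of Lemma \ref{lem:7-heavy} in the specific setting of this section. Recall that at the start of Section \ref{sec:premECE} the graph $G$ was declared to be a $(\Delta+7)$-critical graph, and the convention was set that "heavy", "light", and "expendable" stand for "$7$-heavy", "$7$-light", and "$7$-expendable", respectively. So all I need to do is specialize Lemma \ref{lem:7-heavy} to $k=7$.

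For the first assertion, I simply substitute $k=7$ into the first sentence of Lemma \ref{lem:7-heavy}: in a $(\Delta+7)$-critical graph $G$, every $7^-$-vertex having a $2$-neighbour is $7$-heavy. By the notational convention of Section \ref{sec:premECE}, this is exactly the claim that every $7^-$-vertex having a $2$-neighbour is heavy.

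For the second assertion, I again substitute $k=7$ into the second sentence of Lemma \ref{lem:7-heavy}: every $(k-1)=6$-vertex having a $3(1)$-neighbour is $7$-heavy, that is, heavy. Since both statements are pure specializations, there is no real obstacle; the proof is a single invocation of Lemma \ref{lem:7-heavy} together with the abbreviation introduced in this section, and no extra calculation or case analysis is required.
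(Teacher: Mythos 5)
Your proposal is correct and matches the paper exactly: the paper introduces this corollary with the sentence ``As $k=7$, we may rewrite Lemma \ref{lem:7-heavy} as follows,'' so it is indeed nothing more than the specialization of Lemma \ref{lem:7-heavy} to $k=7$ under the section's convention that $G$ is $(\Delta+7)$-critical and ``heavy'' abbreviates ``$7$-heavy.'' No further argument is needed.
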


\begin{prop}\label{prop:3-4-5} 
\begin{itemize}
\item[$(a)$] A $3(1)$-vertex cannot be adjacent to a $6^-$-vertex having a $2$-neighbour.
\item[$(b)$] A $4(2)$-vertex cannot be adjacent to a $5^-$-vertex having a $2$-neighbour.
\item[$(c)$] A $5(3)$-vertex cannot be adjacent to a $4^-$-vertex having a $2$-neighbour.
\end{itemize}
\end{prop}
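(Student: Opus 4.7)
The plan is to prove all three parts by the same short counting argument, without ever manipulating a coloring. In each of (a)--(c), let $v$ denote the central vertex (the $3(1)$-, $4(2)$-, or $5(3)$-vertex), let $u$ be the ``small'' neighbour with $2$-neighbour $w$, let $x_1,\dots,x_d$ be the $2$-neighbours of $v$ (so $d=1,2,3$ in parts (a), (b), (c)), and let $y$ be the remaining non-$2$-neighbour of $v$. Since $v$ is a $7^-$-vertex with a $2$-neighbour, Corollary~\ref{cor:7-heavy} guarantees that $v$ is $7$-heavy; hence $D(v)\geq \D+7+e_7(v)$. I would then squeeze both sides of this inequality to a numerical clash.

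On the upper side, $D(v)=2d+d(u)+d(y)$. Plugging in $d(u)\leq 6,5,4$ from the hypothesis in parts (a), (b), (c) and $d(y)\leq \D$ gives $D(v)\leq \D+8,\D+9,\D+10$ respectively. On the lower side, each $x_i$ is a $2$-vertex adjacent to $v$, with $d(v)\leq 5$, so $D(x_i)\leq d(v)+\D<\D+7$ and $x_i$ is $7$-expendable; similarly $w$ is a $2$-vertex adjacent to the $6^-$-vertex $u$, hence $7$-expendable, and $w$ lies in $N_2(v)$ because $u\in N(v)$ and $w\in N(u)$. Provided the $d+1$ vertices $x_1,\dots,x_d,w$ are pairwise distinct, this gives $e_7(v)\geq d+1$, so $D(v)\geq \D+7+(d+1)\geq \D+9,\D+10,\D+11$, which clashes with the upper bound by exactly $1$ in each case.

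The only step that needs real work --- and what I regard as the main obstacle --- is verifying that $w\neq x_i$ for every $i$. If instead $w=x_i$ for some $i$, then $x_i$, being a $2$-vertex, would have $N(x_i)=\{u,v\}$, and together with the edge $uv$ this forms the triangle $u\,v\,x_i$ in $G$, contradicting $g(G)\geq 5$. Thus the girth-$5$ hypothesis forces $w$ to contribute an extra unit to $e_7(v)$, and the three numerical contradictions above complete the proof.
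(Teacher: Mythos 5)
Your proposal is correct and is essentially the paper's own argument: invoke Corollary \ref{cor:7-heavy} to get that the central vertex $v$ is heavy, so $D(v)\geq \D+7+e_7(v)$, and then count the $2$-neighbours of $v$ together with the neighbour's $2$-neighbour as expendable vertices in $N_2(v)$ to contradict the obvious upper bound on $D(v)$. Your explicit girth-based check that these expendable $2$-vertices are distinct is a detail the paper leaves implicit, and it (together with Corollary \ref{cor:properties}(a), which rules out the neighbour $u$ itself being a $2$-vertex) is exactly what makes the count legitimate.
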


\begin{proof}
$(a)$. Let $v$ be a $3(1)$-vertex, and let $u$ be a $2$-neighbour of $v$. It follows from Corollary \ref{cor:properties}-(c) that  each  neighbour of $v$ other than $u$ is a $6^+$-vertex. Assume that $v$ has a $6$-neighbour $w$ adjacent to a $2$-vertex $z$. By Corollary \ref{cor:7-heavy}, $v$ is a heavy vertex, and so it should be $D(v)\geq \D+7+e_7(v)$. However, $e_7(v)\geq 2$ since $u$ and $z$ are expendable vertices belonging to $N_2(v)$. This gives a contradiction with the fact that $D(v)\leq  \D+d(u)+d(w) \leq \D+8$.\medskip

$(b)$-$(c)$. The proofs are similar with $(a)$.
\end{proof}

We call a path $xyz$ as a \emph{poor path} if $7\leq d(y) \leq 8$, $2\leq d(x) \leq 3$ and $2\leq d(z) \leq 3$. If a poor path $xyz$ lies on the boundary of a face $f$, we call the vertex $y$ as \emph{f-poor vertex}.

It follows from the definition of the poor path that we can bound the number of those paths in a face.

\begin{cor}\label{cor:poor-path}
Each face $f$ has at most  $ \floor[\big]{\frac{\ell(f)}{2}}$  $f$-poor vertices.  
\end{cor}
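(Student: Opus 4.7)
The plan is to exploit two facts: by definition, every $f$-poor vertex is sandwiched on the boundary of $f$ between two $2$-vertices, and by Corollary \ref{cor:properties}-(a), $G$ has no adjacent $2$-vertices. Together these force the $2$-vertices along $\partial f$ to be a non-consecutive set that is at least as large as the set of $f$-poor vertices, which is exactly what is needed.

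First, I would fix a cyclic orientation of the boundary walk of $f$ and write it as $v_1 v_2 \cdots v_{\ell(f)}$ with indices taken modulo $\ell(f)$. If $v_i$ is an $f$-poor vertex, then by definition the poor path $v_{i-1} v_i v_{i+1}$ lies on $\partial f$, so both $v_{i-1}$ and $v_{i+1}$ are $2$-vertices. Next, I would define the map $\varphi$ sending each $f$-poor vertex $v_i$ to its cyclic predecessor $v_{i-1}$. Since cyclic shift on the boundary is a bijection, $\varphi$ is injective, and by the previous observation its image consists entirely of $2$-vertices on $\partial f$. Hence the number of $f$-poor vertices is bounded above by the number of $2$-vertices appearing on the boundary of $f$.

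Finally, I would bound the latter quantity. Because $G$ contains no two adjacent $2$-vertices, no two $2$-vertices can appear as consecutive entries of the cyclic sequence $v_1, \ldots, v_{\ell(f)}$, so the $2$-vertices on $\partial f$ form an independent set in the boundary cycle. The maximum size of an independent set in a cycle of length $\ell(f)$ is $\floor{\ell(f)/2}$, which yields the desired inequality.

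The only subtle point I anticipate is the case where the boundary of $f$ is not a simple cycle (for instance, when $G$ has cut vertices and some vertex appears multiple times on $\partial f$). Fortunately, the counting argument above goes through unchanged if one works with the boundary walk rather than a simple cycle, since the injectivity of $\varphi$ uses only the cyclic-shift structure of the walk, and the ``no two consecutive $2$-vertices'' condition translates verbatim to a walk of length $\ell(f)$.
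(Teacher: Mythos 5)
Your proof is correct and follows the same reasoning the paper intends: the paper simply notes that the bound follows from the definition of a poor path together with Corollary \ref{cor:properties}-(a) (no adjacent $2$-vertices), and your injection of poor positions into $2$-vertex positions on the boundary walk, plus the independent-set bound $\floor{\ell(f)/2}$, is just a careful write-up of that argument.
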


In the rest of this section, we will apply discharging to show that $G$ does not exist. We assign to each vertex $v$ a charge $\mu(v)=\frac{3d(v)}{2}-5$ and to each face $f$ a charge $\mu(f)=\ell(f)-5$. By Euler's formula, we have
$$\sum_{v\in V}\left(\frac{3d(v)}{2}-5\right)+\sum_{f\in F}(\ell(f)-5)=-10$$

We next present some rules and redistribute accordingly. Once the discharging finishes, we check the final charge $\mu^*(v)$ and $\mu^*(f)$. If $\mu^*(v)\geq 0$ and $\mu^*(f)\geq 0$, we get a contradiction that no such counterexample graph $G$ can exist.

\subsection{Discharging Rules} \label{sub:} ~\medskip

We apply the following discharging rules.

\begin{itemize}

\item[\textbf{R1:}] Every $2$-vertex receives $1$  from each of its neighbours.
\item[\textbf{R2:}] Every $3(1)$-vertex receives $\frac{3}{4}$  from each of its $6^+$-neighbours.
\item[\textbf{R3:}] Every light $3(0)$-vertex receives $\frac{1}{6}$  from each of its neighbours.
\item[\textbf{R4:}] Let $v$ be a heavy $3(0)$-vertex.
\begin{itemize}
\item[$(a)$] If $v$ is adjacent to a light $3$-vertex, then $v$ receives $\frac{1}{3}$  from each of its $5^+$-neighbours.
\item[$(b)$] If $v$ is adjacent to a heavy $3$-vertex, then $v$ receives $\frac{1}{4}$  from each of its  $4^+$-neighbour.
\item[$(c)$] If $v$ is adjacent to no $3$-vertex, then $v$ receives $\frac{1}{6}$  from each of its neighbours.
\end{itemize}
\item[\textbf{R5:}] Let $v$ be a $4$-vertex.
\begin{itemize}
\item[$(a)$] If $v$ is a $4(1)$-vertex adjacent to exactly one $3$-vertex, then $v$ receives $\frac{1}{12}$  from each $5$-neighbour and  $\frac{1}{6}$  from each of its $6^+$-neighbours.
\item[$(b)$] If $v$ is a $4(1)$-vertex adjacent to  exactly two $3$-vertices, then $v$ receives $\frac{1}{3}$  from each of its $7^+$-neighbour.
\item[$(c)$] If $v$ is a $4(2)$-vertex, then $v$ receives  $\frac{1}{2}$  from each of its $5^+$-neighbour.
\end{itemize}

\item[\textbf{R6:}] Every $5(3)$-vertex receives $\frac{1}{4}$  from each of its $5(0)$- and $6$-neighbours; $\frac{1}{2}$  from each of its $7^+$-neighbours.

\item[\textbf{R7:}] Every $6(4)$-vertex receives $\frac{1}{6}$  from each of its $7^+$-neighbours.

\item[\textbf{R8:}] If $u$ and $v$ are adjacent $6^+$-vertices other than $6(4)$-vertex, then each of  $u,v$ gives $\frac{1}{8}$  to the faces containing $uv$. In particular, if a heavy $8(7)$-vertex $u$ is adjacent to a heavy $8$-vertex $v$ such that all neighbours of $v$ are heavy, then $u$ receives $\frac{1}{4}$ from $v$.

\item[\textbf{R9:}] Every face $f$ transfers its positive charge equally to  its incident $f$-poor vertices (see Figure \ref{fig:donation}).

\end{itemize}

Remark that if a heavy $3$-vertex $v$  has a light $3$-neighbour, then the other neighbours of $v$ must be $5^+$-vertices by Corollary \ref{cor:Dplus1}. This particularly implies that $v$ cannot have both a light $3$-neighbour and a heavy $3$-neighbour. In this way, the rules R4(a) and R4(b) cannot be applied to $v$ simultaneously.

\begin{figure}[htb]
\centering   
\begin{tikzpicture}[scale=1]
\node [nod2] at (0,.5) (x) [label=left: {\scriptsize $x$}] {};
\node [nod2] at (2,2) (y) [label=above:{\scriptsize $y$}] {}
	edge  (x);		
\node [nod2] at (4,.5) (z) [label=right:{\scriptsize $z$}] {}
	edge [] (y);
\node at (2,-.3) (t)     {$f$} ;	
		
\draw[densely dotted] (x) .. controls (1,-2) and  (3,-2) .. (z);

\draw[->, thick]  (2,.5) to (2,1.5);	

\node at (-.6,.9) (asd)     {\scriptsize $2\leq d(x)\leq 3$} ;	
\node at (4.6,.9) (asd)     {\scriptsize $2\leq d(z) \leq 3$} ;	
\node at (2,2.75) (asd)     {\scriptsize $7\leq d(y) \leq 8$};	
\end{tikzpicture}  
\caption{Donation from a $5^+$-face $f$.}
\label{fig:donation}
\end{figure}

\noindent
\textbf{Checking} $\mu^*(v), \mu^*(f)\geq 0$, for $v\in V(G), f\in F(G)$.\medskip

Clearly $\mu^*(f)\geq 0$ for each $f\in F(G)$, since every face transfers its positive charge equally to its incident $f$-poor vertices by R9. \medskip

We pick a vertex $v\in V(G)$ with $d(v)=k$. \medskip

\textbf{(1).} Let $k=2$.  By Corollary \ref{cor:properties}(a), $v$ is adjacent to two $3^+$-vertices. It follows from applying R1 that $v$ receives $1$  from each of its neighbour, and so  $\mu^*(v)\geq -2+2\times 1 =0$. \medskip 

\textbf{(2).} Let $k=3$. By Corollary \ref{cor:properties}(c), $v$ has at most one $2$-neighbour. 
In fact, if $v$ has a $2$-neighbour $u$, then the other neighbours of $v$ are $6^+$-vertices by Corollary \ref{cor:properties}(c), and so  $v$ receives $\frac{3}{4}$  from each of its $6^+$-neighbour by R2. Thus, we have  $\mu^*(v)\geq -\frac{1}{2}+2\times \frac{3}{4}- 1=0$ after $v$ sends $1$ to its $2$-neighbour by R1.

Let us now assume that $v$ has no $2$-neighbour. If $v$ is a light vertex, then $v$ receives $\frac{1}{6}$ from each of its neighbours by R3. In this case, $v$ does not have a light $3$-neighbour by Lemma \ref{lem:light-heavy}, and so we have $\mu^*(v)\geq -\frac{1}{2}+3\times \frac{1}{6}=0$. We may therefore assume that $v$ is a heavy vertex. If $v$ has a light $3$-neighbour, then the other neighbours of $v$ must be $5^+$-vertices by Corollary \ref{cor:Dplus1}. So, $v$ receives $\frac{1}{3}$ from each of its $5^+$-neighbour by R4(a). It then follows that $\mu^*(v)\geq -\frac{1}{2}+2\times \frac{1}{3}-\frac{1}{6}=0$ after $v$ sends $\frac{1}{6}$ to its light $3$-neighbour by R3.  
If $v$ has a heavy $3$-neighbour $u$, then the other neighbours of $v$ must be $4^+$-vertices since $D(v)\geq \D+7$. Note that $u$ does not receive any charge from $v$. Thus  $\mu^*(v)\geq -\frac{1}{2}+2\times \frac{1}{4}=0$ after $v$ receives $\frac{1}{4}$  from each of its $4^+$-neighbours by R4(b). 
Finally, if $v$ has no $3$-neighbour, then $\mu^*(v)\geq -\frac{1}{2}+3\times \frac{1}{6}=0$ after $v$ receives $\frac{1}{6}$  from each of its neighbours by R4(c). \medskip

\textbf{(3).} Let $k=4$. The initial charge of $v$ is $\mu(v)=\frac{3d(v)}{2}-5=1$. Observe that $v$ has no $3(1)$-neighbour by Corollary \ref{cor:properties}(c). Note that a $3^+$-vertex other than $3(1)$-vertex receives at most $\frac{1}{4}$ from each of its $4$-neighbours by R3-R4. Therefore, if $v$ has no $2$-neighbour, then $\mu^*(v)\geq 1-4\times \frac{1}{4} = 0$ by applying R3 and R4(b)-(c). So we assume that $v$ has at least one $2$-neighbour. In fact, $v$ can have at most two $2$-neighbours by Corollary \ref{cor:properties}(d). Thus $1\leq n_2(v)=t \leq 2$, and it follows from Corollary \ref{cor:7-heavy} that  $v$ is heavy, so $D(v)\geq \D+7+e_7(v)\geq \D+8\geq 15$, since $v$ has an expendable $2$-neighbour.
Denote by $x_1,\ldots, x_t$ the $2$-neighbours of $v$.\medskip

Let $n_2(v) = 1$. Clearly, $v$ can have at most two $3$-neighbours since it is heavy.  If $v$ has no any $3$-neighbour, then  $\mu^*(v)\geq 0$ after $v$ transfers $1$  to $x_1$ by R1.  We remark that if $v$ has a heavy $3$-neighbour $u$, then $D(u)\geq \D+8$ by Corollary \ref{cor:7-heavy}, since $x_1\in N_2(u)$ is an expendable vertex, and this implies that $u$ cannot have any $3^-$-neighbour, i.e., the rules R4(a)-(b) cannot be applied to $u$. 
Suppose first that $v$ has exactly one $3$-neighbour $u$. Then  $v$ has either two $5$-neighbours or at least one $6^+$-neighbour as $D(v)\geq 15$, and so $v$ receives totally at least $\frac{1}{6}$ from its $4^+$-neighbours by R5(a).  In this manner, we have $\mu^*(v)\geq 1+\frac{1}{6}-1-\frac{1}{6} =0$ after $v$ sends $1$ to $x_1$ by R1 and  $\frac{1}{6}$  to $u$ by R3 and R4(c). If $v$ has two $3$-neighbours $u$ and $w$, then the last neighbour of $v$ must be $7^+$-vertex as $D(v)\geq 15$. Then $v$ receives $\frac{1}{3}$ from its $7^+$-neighbour by R5(b). It follows that $\mu^*(v)\geq 1+\frac{1}{3}-1-2\times \frac{1}{6}= 0$ after $v$ sends $1$ to $x_1$ by R1 and  $\frac{1}{6}$  to each  of $u,w$ by R3 and R4(c). \medskip  

Let $n_2(v) = 2$. Denote by $z_1,z_2$ the neighbours of $v$ other than $x_1,x_2$. Since $v$ is heavy, and $x_1,x_2$ are expendable, we have   $D(v)\geq \D+7+e_7(v)\geq 16$, and so each of $z_1,z_2$ is a $5^+$-vertex. 
Then $v$ receives totally at least $1$ from $z_1$ and $z_2$ by R5(c).  Thus  $\mu^*(v)\geq 1+1-2\times 1 =0$ after $v$ sends $1$ to each of $x_1,x_2$ by R1.  \medskip

\textbf{(4).} Let $k=5$. The initial charge of $v$ is $\mu(v)=\frac{3d(v)}{2}-5=\frac{5}{2}$. Note that a $3^+$-vertex receives at most $\frac{1}{2}$ from its $5$-neighbour by R3-R6. So, if $v$ has no $2$-neighbour, then we have $\mu^*(v)\geq \frac{5}{2}-5\times \frac{1}{2}=0$ after $v$ sends at most $\frac{1}{2}$ to each of its neighbours by R3-R6. We may therefore assume that $1\leq n_2(v)=t \leq 3$ where the last inequality comes from Corollary \ref{cor:properties}(e). In particular, $v$ is heavy by Corollary \ref{cor:7-heavy}, and  $D(v)\geq \D+8$ by Corollary \ref{cor:Dplus1}, since $v$ has at least one $2$-neighbour. Remark that $v$  cannot have any $3(1)$- and $4(2)$-neighbours by Proposition \ref{prop:3-4-5}(a)-(b). Denote by $x_1,x_2,\ldots,x_t$ the $2$-neighbours of $v$. \medskip

Let $n_2(v) = 1$. 
Then $\mu^*(v)\geq \frac{5}{2} -1-4\times \frac{1}{3}> 0$ after $v$ sends $1$ to $x_1$ by R1;  at most $\frac{1}{3}$  to each of its other neighbours by  R3, R4(a)-(c), R5(a). \medskip

Let $n_2(v) = 2$. Recall that $v$ cannot have any $4(2)$-neighbour.  Moreover, at most two neighbours of $v$  can be $3$-vertices since $v$ is heavy.  If $v$ has two $4^+$-neighbours, then  $\mu^*(v)\geq \frac{5}{2} -2\times 1- \frac{1}{3}-2\times \frac{1}{12}= 0$ after $v$ sends $1$ to each $x_i$ by R1; at most $\frac{1}{3}$  to each of its $3$-neighbour (if exists) by  R3-R4; at most $\frac{1}{12}$  to each of its $4(1)$-neighbours (if exists) by  R5(a). We may therefore suppose that $v$ has exactly two $3$-neighbours $u$ and $w$, which are clearly $3(0)$-vertices by Proposition \ref{prop:3-4-5}(a). 
In particular, the last neighbour of $v$ is a $6^+$-vertex, since $v$ is heavy. 
Clearly, at least one of $u$ and $w$ is heavy, otherwise we would have $D(v)\geq \D+7+|\{x_1,x_2,u,w\}|= \D+11$ by Lemma \ref{lem:DplusS}, a contradiction with the fact that $D(v)\leq \D+3+3+2+2=\D+10$.  This implies that at least one of $u$ and $w$ is heavy, say $u$. Note that $u$ cannot be adjacent to any $3$-vertex since $v$ has two expendable $2$-neighbours. Thus, $\mu^*(v)\geq \frac{5}{2} -2\times 1- \frac{1}{3}- \frac{1}{6}=0$ after $v$ sends $1$ to each $x_i$ by R1;  $\frac{1}{6}$  to $u$ by  R3, R4(c); at most $\frac{1}{3}$  to $w$  by R3 and R4.  \medskip

Let $n_2(v) = 3$.  The remaining neighbours of $v$ other than $x_1,x_2,x_3$ are $4^+$-vertices, as $v$ is heavy. Denote by $z_1$ and $z_2$ the $4^+$-neighbours of $v$. Clearly, $d(z_1)+d(z_2)\geq \D+4\geq 11$, since $v$ is heavy, and it has three expendable $2$-neighbours. By the same reason, if one of $z_1$ or $z_2$ is a $5$-vertex having a $2$-neighbour or a $4$-vertex, then the other must be a $7^+$-vertex. 
First suppose that each of $z_1$ and $z_2$ is a $5(0)$- or $6^+$-vertex. Then $v$ receives at least $\frac{1}{4}$  from each of $z_1$ and $z_2$ by R6, and so $\mu^*(v)\geq \frac{5}{2}+2\times \frac{1}{4} -3\times 1= 0$  after $v$ transfers $1$  to each $x_i$ by R1. Now, we suppose that one of $z_1 $ or $z_2$ is  a $5$-vertex having a $2$-neighbour or a $4$-vertex. Thus $\mu^*(v)\geq \frac{5}{2}+ \frac{1}{2} -3\times 1= 0$, after  $v$ receives $\frac{1}{2}$  from its $7^+$-neighbour by R6, and sends $1$  to each $x_i$ by R1.\medskip

\textbf{(5).} Let $k=6 $. We have $\mu(v)=\frac{3d(v)}{2}-5=4$. Notice first that if $v$ has a $2$- or $3(1)$-neighbour, then $v$ must be heavy by Corollary \ref{cor:7-heavy}. This infers that $v$ cannot have five $2$-neighbours or six $3(1)$-neighbours. Moreover, if $v$ has a $2$-neighbour, then it cannot have any $3(1)$-neighbour by Proposition \ref{prop:3-4-5}(a). 

Suppose first that $v$ has no $2$-neighbour.   If $v$ has five $3(1)$-neighbours, then the last neighbour must be $4(0)$-, $5(0)$-, $5(1)$- or $6^+$-vertex,  since $v$ is heavy by Corollary \ref{cor:7-heavy}. Thus $\mu^*(v)\geq 4-5\times \frac{3}{4}-2\times\frac{1}{8}=0$ after $v$ sends $\frac{3}{4}$ to each of its $3(1)$-neighbour by R2, and $\frac{1}{8}$ to each face containing $vu$ by R8 when $v$ has a $6^+$-neighbour $u$ other than $6(4)$-vertex.  If $v$ has  at most four $3(1)$-neighbours, then  $\mu^*(v)\geq 4-4\times \frac{3}{4}-2\times\frac{1}{2}=0$ after  $v$ sends  $\frac{3}{4}$ to each of its $3(1)$-neighbour by R2;  at most $\frac{1}{2}$ to each of its other $5^-$-neighbour by R3-R6;  $\frac{1}{8}$ to each face containing $vu$ by R8 when $v$ has a $6^+$-neighbour $u$ other than $6(4)$-vertex.

Next suppose that $v$ has a $2$-neighbour, and so $v$ cannot have any $3(1)$-neighbour as earlier stated. By Corollary \ref{cor:7-heavy}, $v$ is heavy.  If $n_2(v)=1$, then we have $\mu^*(v)\geq 4-1-5\times \frac{1}{2}>0$ after $v$ sends  $1$  to its $2$-neighbour by R1;   at most $\frac{1}{2}$ to each of its other $5^-$-neighbours by R3-R6; $\frac{1}{8}$ to each face containing $vu$ by R8 when $v$ has a $6^+$-neighbour $u$ other than $6(4)$-vertex.  
Suppose now that $2\leq n_2(v) \leq 3$. Similarly to Proposition \ref{prop:3-4-5}(b), we deduce that a $4(2)$-vertex cannot be adjacent to a $6$-vertex having two $2$-neighbour, since a $4(2)$-vertex is heavy by Corollary \ref{cor:7-heavy}. Thus, $v$ has no $4(2)$-neighbour, and so $\mu^*(v)\geq 4-3\times 1-3\times \frac{1}{3}= 0$ after $v$ sends  $1$  to each of its $2$-neighbours by R1;   at most $\frac{1}{3}$ to each of its other $5^-$-neighbours by R3-R6; $\frac{1}{8}$ to each face containing $vu$ by R8 when $v$ has a $6^+$-neighbour $u$ other than $6(4)$-vertex.

We may further suppose that $n_2(v)=4$. Recall that the rule R8 cannot be applied to $v$, since it is a $6(4)$-vertex.  Let $z_1$ and $z_2$ be $3^+$-neighbours of $v$ with $d(z_1)\geq d(z_2) $. Since $v$ is heavy, it cannot have any $5(3)$-neighbour. In particular, we have $d(z_1)+d(z_2)\geq \D+3\geq 10$, since $v$ has four expendable $2$-neighbours. This implies that $z_1$ is a $5^+$-vertex other than $5(3)$-vertex.  On the other hand, we can say that if $z_1$ is a $7^+$-vertex, then $z_2$ would be a $3(0)$-vertex having no $3$-neighbour or a $4(0)$-vertex or a $4(1)$-vertex or a $5(r)$-vertex with $0\leq r \leq 2$ or a $6^+$-vertex  by Lemma \ref{lem:DplusS}. In such a case, $v$ receives $\frac{1}{6}$ from $z_1$ by applying R7, and sends at most $\frac{1}{6}$ to $z_2$ by R3-R6.  Thus we have $\mu^*(v)\geq 4-4\times 1+\frac{1}{6}-\frac{1}{6}=0$  after $v$ sends  $1$  to each of its $2$-neighbour by R1. 
If $z_1$ is a $6^-$-vertex, then $z_2$ would be a $4(0)$-, $5(0)$-, $5(1)$- or $6$-vertex.  Therefore $\mu^*(v)\geq 4-4\times 1=0$  after $v$ sends  $1$  to each of its $2$-neighbours by R1. \medskip

\textbf{(6).} Let $k=7$. The initial charge of $v$ is $\mu(v)=\frac{3d(v)}{2}-5=\frac{11}{2}$. We start by noting that any $3^+$-vertex  receives at most $\frac{3}{4}$ from its $7$-neighbour by R2-R7. In particular, any vertex other than $2$- and $3(1)$-vertex receives at most $\frac{1}{2}$ from its $7$-neighbour by R3-R7. It then follows that if $v$ has no $2$-neighbour, then we have  $\mu^*(v)\geq \frac{11}{2}-7\times \frac{3}{4}>  0$ after  $v$ transfers at most $\frac{3}{4}$  to each of its neighbours by R2-R7. On the other hand, if $n_2(v)=1$, then we have $\mu^*(v)\geq \frac{11}{2}-1-6\times \frac{3}{4}= 0$ after  $v$ transfers $1$  to its $2$-neighbour by R1;  at most $\frac{3}{4}$ to each of its $3^+$-neighbours by R2-R7, in particular, if $v$ has a $6^+$-neighbour $u$ other than $6(4)$-vertex, then $v$ sends $\frac{1}{8}$ to each face containing $vu$ by R8 instead of sending $\frac{3}{4}$ to $u$.  So, we may assume that $2\leq n_2(v)=t \leq 7$. Denote by $x_1,x_2,\ldots,x_t$ the $2$-neighbours of $v$, and let $y_i$ be the neighbour of $x_i$ different from $v$. Let $z_1,z_2,\ldots,z_{7-t}$ be the neighbour of $v$ other than $x_1,x_2,\ldots,x_t$. Clearly, $v$ is heavy by Corollary \ref{cor:7-heavy}.  Let $f_1,f_2\ldots,f_7$ be faces incident to $v$, and suppose that they have a clockwise order on the plane.
Remark that if $v$ is weak-adjacent to a $6^-$-vertex $y_j$, then $x_j$ would be an expendable (also light) vertex. Moreover, this forces that all $x_i$'s are light vertices as well. In such a case, $v$ would have two light neighbours as $t\geq 2$, and this implies that $v$ cannot have any  $3(1)$-neighbour by Lemma \ref{lem:DplusS} and Corollary \ref{cor:7-heavy}. In other words, if $v$ has a $3(1)$-neighbour, then  each $y_i$ must be a $7^+$-vertex. By the same reason, $v$ is not star-adjacent to any $5^-$ or $6(4)$-vertex.\medskip 

Let $n_2(v)=2$.  If $v$ has a neighbour $u$ different from $2$- and $3(1)$-vertices, then we have  $\mu^*(v)\geq \frac{11}{2}-2\times 1-4\times \frac{3}{4}-\frac{1}{2}= 0$ after  $v$ transfers $1$  to  each $x_i$ by R1;  at most $\frac{3}{4}$ to each of its neighbour other than $x_i$'s and $u$ by R2-R7; at most $\frac{1}{2}$ to $u$ by R3-R7 when $u$ is a $5^-$- or a $6(4)$-vertex; $\frac{1}{8}$ to each face containing $vu$ by R8 when $u$ is a $6^+$-vertex different from $6(4)$-vertex.  Thus we further suppose that all neighbours of $v$ other than $x_1$ and $x_2$ are $3(1)$-vertices. Denote by $w_i$ the $3^+$-neighbour of each $z_i$ different from $v$. Recall that each $y_i$ is a $7^+$-vertex, and each $w_i$ is a $6^+$-vertex different from $6(4)$ as earlier stated. 
 We will now show that $v$ receives totally at least $\frac{1}{4}$ from $f_1,f_2\ldots,f_7$.
Since $v$ is weak-adjacent to two $7^+$-vertices and star-adjacent to five $6^+$-vertices other than $6(4)$-vertex,  there exists a face $f_{i}$ for $i\in [7]$ such that  it is either a $5$-face having two adjacent $6^+$-vertices $x$ and $y$ other than $6(4)$-vertices  or a $6^+$-face.
If $f_i$ is a $6^+$-face, then it transfers at least $\frac{1}{3}$ to $v$ by R9 together with Corollary \ref{cor:poor-path}. On the other hand, if  $f_i$ is a $5$-face having two adjacent $6^+$-vertices $x$ and $y$ different from $6(4)$-vertex, then each of $x,y$ gives $\frac{1}{8}$ to the faces containing $xy$ by R8, so $f_i$ gets totally at least $\frac{1}{4}$ from $x,y$ and transfers it to $v$ by R9.
Consequently, $v$ receives at least $\frac{1}{4}$ from $f_i$. Hence $\mu^*(v)\geq \frac{11}{2}+\frac{1}{4}-2\times 1-5\times \frac{3}{4}= 0$  after $v$ transfers $1$  to each $x_i$ by R1; $\frac{3}{4}$  to  each of its $3(1)$-neighbours by R2. \medskip

Let $n_2(v)=3$. First,  suppose that $v$ has a $6^+$-neighbour $u$ different from $6(4)$-vertex. So, we have $\mu^*(v)\geq \frac{11}{2}-3\times 1-3\times \frac{3}{4}-\frac{1}{4}= 0$ after  $v$ transfers $1$  to each $x_i$ by R1; at most $\frac{3}{4}$ to each of its $3^+$-neighbours other than $u$  by R2-R7; $\frac{1}{8}$ to each face containing $vu$ by R8. 
Suppose now that all the neighbours of $v$ consist of $5^-$-vertices and $6(4)$-vertices.
If $v$ has a $6(4)$-neighbour, then, similarly as above, we have $\mu^*(v)\geq \frac{11}{2}-3\times 1-3\times \frac{3}{4}-\frac{1}{6}> 0$ after  $v$ transfers $1$  to each $x_i$ by R1; at most $\frac{3}{4}$ to each of its $5^-$-neighbours other than $x_i$'s  by R2-R6; $\frac{1}{6}$ to each of its $6(4)$-neighbours by R7. 
We may therefore assume that each neighbour of $v$ is a $5^-$-vertex. If $v$ has at most two $3(1)$-neighbours, then $\mu^*(v)\geq \frac{11}{2}-3\times 1-2\times \frac{3}{4}-2\times\frac{1}{2}=0$ after  $v$ transfers $1$  to each $x_i$ by R1; at most $\frac{3}{4}$ to each of its $3(1)$-neighbours  by R2; at most $\frac{1}{2}$ to each of its other neighbours by R3-R6. Thus we further suppose that $v$ has at least three $3(1)$-neighbours. 
Denote by $w_i$ the $3^+$-neighbour of each $z_i$ different from $v$. 
Recall that each $y_i$ is a $7^+$-vertex, and each $w_i$ is a $6^+$-vertex different from $6(4)$ as earlier stated.  
Since $v$ is weak-adjacent to three $7^+$-vertices and star-adjacent to three $6^+$-vertex different from $6(4)$-vertex, we conclude that there exist  at least two faces $f_{i_1},f_{i_2} $ for $i_1,i_2\in [7]$ 
such that each of them is either a $5$-face having two adjacent $6^+$-vertices other than $6(4)$-vertices  or a $6^+$-face. 
Similarly as above, each of $f_{i_1},f_{i_2}$ transfers at least $\frac{1}{4}$ to $v$ by applying R8 and R9 together with Corollary \ref{cor:poor-path}. Hence, $\mu^*(v)\geq \frac{11}{2}+2\times\frac{1}{4}-3\times 1-4\times \frac{3}{4}= 0$ after  $v$ transfers $1$  to each $x_i$ by R1;  $\frac{3}{4}$ to each of its $3(1)$-neighbours by R2; at most $\frac{1}{2}$ to each of its other neighbours by R3-R6. \medskip

Let $n_2(v)=4$. If $v$ has no $3(1)$-neighbour, then we have  $\mu^*(v)\geq \frac{11}{2}-4\times 1-3\times \frac{1}{2}=0$  after $v$ sends  $1$  to each $x_i$ by R1;   at most $\frac{1}{2}$ to each of its  $5^-$- and $6(4)$-neighbours other than $x_i$'s by R3-R7; $\frac{1}{8}$ to each face containing $vu$ by R8 when $v$ has a $6^+$-neighbour $u$ other than $6(4)$-vertex. We therefore suppose that $v$ has a $3(1)$-neighbour, say $z_1$.  Recall that  each $y_i$ is a $7^+$-vertex, and the neighbour of $z_1$ other than $v$ and $2$-vertex is a $6^+$-vertex different from $6(4)$-vertex as earlier stated.  Namely, $v$ is weak-adjacent to four $7^+$-vertices and star-adjacent to at least one $6^+$-vertex different from $6(4)$-vertex.
If $v$ has exactly one (resp. two) $3(1)$-neighbour, then we conclude that there exist two (resp. three) faces incident to $v$  such that each of them is either a  $5$-face having two adjacent $6^+$-vertices  other than $6(4)$-vertices  or a $6^+$-face. 
Similarly as above,  each of those faces transfers at least $\frac{1}{4}$ to $v$ by applying R8 and R9 together with Corollary \ref{cor:poor-path}. Hence, $\mu^*(v)\geq \frac{11}{2}+2\times\frac{1}{4}-4\times 1-2\times \frac{3}{4}-\frac{1}{2}= 0$ or $\mu^*(v)\geq \frac{11}{2}+3\times\frac{1}{4}-4\times 1-3\times \frac{3}{4}= 0$ after  $v$ transfers $1$  to each $x_i$ by R1;  $\frac{3}{4}$ to each of its $3(1)$-neighbours  by R2;  at most $\frac{1}{2}$ to each of its $5^-$- and $6(4)$-neighbours other than $x_i$'s and $3(1)$-vertex by R3-R7; $\frac{1}{8}$ to each face containing $vu$ by R8 when $v$ has a $6^+$-neighbour $u$ other than $6(4)$-vertex. \medskip

Let $n_2(v)= 5$. We first consider the case that $v$ is weak-adjacent to a $6^-$-vertex $y_j$. Then $x_j$ would be an expendable (also light) vertex, and this forces that all $x_i$'s are light vertices as well. By Corollary \ref{cor:7-heavy}, $v$ is a heavy vertex. Besides, $v$ cannot have a neighbour $u$ forming a $3(1)$-vertex or a $3(0)$-vertex having a light $3$-neighbour or a $4(2)$-vertex or a $4(1)$-vertex having a $3$-neighbour or a $5(3)$-vertex, since otherwise, it would be  $D(u)<\D+7+|S|$, where $S$ is the set of light vertices of distance at most $2$ from $u$,  a contradiction by Lemma \ref{lem:DplusS}. 
It follows that each neighbour of $v$ other than $x_i$'s receives at most $\frac{1}{4}$ from $v$ by R3-R7. Therefore, we have  $\mu^*(v)\geq \frac{11}{2}-5\times 1-2\times \frac{1}{4}=0$   after $v$ sends  $1$  to each $x_i$ by R1;   at most $\frac{1}{4}$ to each of its $5^-$- and $6(4)$-neighbours other than $x_i$'s by R3-R7; $\frac{1}{8}$ to each face containing $vu$ by R8 when $v$ has a $6^+$-neighbour $u$ other than $6(4)$-vertex. We may further assume that each $y_i$ is a $7^+$-vertex.

Suppose that $v$ has no $3(1)$-neighbour. Then, there exist $f_{i_1},f_{i_2},f_{i_3}$ for $i_1,i_2,i_3\in [7]$ such that each of them is either a  $5$-face having two adjacent $6^+$-vertices  other than $6(4)$-vertices  or a $6^+$-face. Similarly as above,  each of those faces transfers at least $\frac{1}{4}$ to $v$ by applying R8 and R9 together with Corollary \ref{cor:poor-path}. Hence, $\mu^*(v)\geq \frac{11}{2}+3\times\frac{1}{4}-5\times 1-2\times \frac{1}{2}>0$  after $v$ sends  $1$  to each $x_i$ by R1;   at most $\frac{1}{2}$ to each of its $5^-$ and $6(4)$-neighbour other than $x_i$'s by R3-R7; $\frac{1}{8}$ to each face containing $vu$ by R8 when $v$ has a $6^+$-neighbour $u$ other than $6(4)$-vertex.

We now suppose that $v$ has a $3(1)$-neighbour, say $z_1$.  Recall that  each $y_i$ is a $7^+$-vertex, and the neighbour of $z_1$ other than $v$ and $2$-vertex is a $6^+$-vertex different from $6(4)$-vertex as earlier stated. Since $v$ is weak-adjacent to five $7^+$-vertices and star-adjacent to at least one $6^+$-vertex different from $6(4)$-vertex, we conclude that there exist four faces $f_{i_1},f_{i_2},f_{i_3},f_{i_4}$ for $i_1,i_2,i_3,i_4\in [7]$ such that each of them is either a  $5$-face having two adjacent $6^+$-vertices other than $6(4)$-vertices  or a $6^+$-face.
Each of those faces transfers at least $\frac{1}{4}$ to $v$ by applying R8 and R9 together with Corollary \ref{cor:poor-path}. Hence, $\mu^*(v)\geq \frac{11}{2}+4\times\frac{1}{4}-5\times 1-2\times \frac{3}{4}= 0$ after  $v$ transfers $1$  to each $x_i$ by R1;  at most $\frac{3}{4}$  to each of its  $5^-$ and $6(4)$-neighbours other than $x_i$'s by R2-R7; $\frac{1}{8}$ to each face containing $vu$ by R8 when $v$ has a $6^+$-neighbour $u$ other than $6(4)$-vertex. \medskip

Let $n_2(v)\geq 6$. Notice first that if $v$ is weak-adjacent to a $6^-$-vertex $y_j$, then $x_j$ would be an expendable (also light) vertex. This forces that all $x_i$'s are light vertices as well. By Corollary \ref{cor:7-heavy}, $v$ is a heavy vertex. However, we have $D(v)\leq \D+6+|S|$, where $S$ is the set of light vertices of distance at most $2$ from $v$, a contradiction by Lemma \ref{lem:DplusS}. Thus  each $y_i$ must be a $7^+$-vertex. Similarly as above, if $n_2(v)=6$ (resp. $7$), then there exist five (resp. seven) faces incident to $v$ such that each of them is either a  $5$-face having two adjacent $6^+$-vertices  other than $6(4)$-vertices  or a $6^+$-face.
Each of those faces transfers at least $\frac{1}{4}$ to $v$ by applying R8 and R9 together with Corollary \ref{cor:poor-path}. Hence, we have $\mu^*(v)\geq \frac{11}{2}-6\times 1-\frac{3}{4}+5\times\frac{1}{4}= 0$  or $\mu^*(v)\geq \frac{11}{2}-7\times 1+7\times\frac{1}{4}> 0$  after  $v$ transfers $1$  to each $x_i$ by R1;  at most $\frac{3}{4}$ to each of its $5^-$- and $6(4)$-neighbours other than $x_i$'s by R2-R7; $\frac{1}{8}$ to each face containing $vu$ by R8 when $v$ has a $6^+$-neighbour $u$ other than $6(4)$-vertex. \medskip

\textbf{(7).} Let $k=8$. The initial charge of $v$ is $\mu(v)=\frac{3d(v)}{2}-5=7$. 
We first remark that if $v$ and all neighbours of $v$ are heavy, and $v$ is adjacent to a heavy $8(7)$-vertex $u$, then $v$ gives $\frac{1}{4}$ to $u$ by R8 as well as $\frac{1}{8}$ to each face containing $vu$ by R8. Thus, $v$ loses totally $\frac{1}{2}$ due to its heavy $8(7)$-neighbour $u$ when all neighbours of $u$ are heavy. 

Observe that $\mu^*(v)\geq 7-4\times 1-4\times\frac{3}{4}= 0$  if $v$ has at most four $2$-neighbours by R1-R8. So, we may assume that $5\leq n_2(v)=t \leq 8$. Denote by $x_1,x_2,\ldots,x_t$ the $2$-neighbours of $v$, and let $y_i$ be the other neighbour of $x_i$ different from $v$. Let $z_1,z_2,\ldots,z_{8-t}$ be the neighbours of $v$ other than $x_1,x_2,\ldots,x_t$. Notice that if $v$ is weak-adjacent to two $6^-$-vertices $y_p$ and $y_q$, then both $x_p$ and $x_q$ are expendable (also light) vertices. This forces that all $x_i$'s are light as well. In such a case,  $v$ cannot have a  $3(1)$-vertex $u$, since otherwise, it would be $D(u)\leq\D+4+|S|$, where $S$ is the set of light vertices of distance at most $2$ from $u$, a contradiction by Lemma \ref{lem:DplusS} and Corollary \ref{cor:7-heavy}. This infers that if $v$ has a $3(1)$-neighbour, then at most one of $x_i$'s is expendable.
Let $f_1,f_2\ldots,f_8$ be faces incident to $v$, and suppose that they have a clockwise order on the plane.   \medskip

Let $n_2(v)=5$. If $v$ has no $3(1)$-neighbour, then $\mu^*(v)\geq 7-5\times 1-3\times\frac{1}{2}> 0$  after $v$ transfers $1$  to each $x_i$ by R1; at most $\frac{1}{2}$  to each of its neighbours other than $x_i$'s by R3-R8; $\frac{1}{8}$ to each face containing $vu$ by R8 when $v$ has a $6^+$-neighbour $u$ other than $6(4)$-vertex. 
We now suppose that $v$ has a $3(1)$-neighbour $u$. Note that the neighbour of $u$ other than $v$ and $2$-vertex is a $6^+$-vertex different from $6(4)$-vertex by Corollary \ref{cor:properties}(c) and Proposition \ref{prop:3-4-5}(a).
Since $v$ has a $3(1)$-neighbour $u$, the vertex $v$ has at most one expendable $2$-neighbour, and this means that $v$ is weak-adjacent to at least four $7^+$-neighbours.  
Since $v$ is weak-adjacent to four $7^+$-vertices and star-adjacent to a $6^+$-vertex other than $6(4)$-vertex, there exists a face $f_{i} $ for $i\in [8]$ such that it is either a  $5$-face having two adjacent $6^+$-vertices other than $6(4)$-vertices  or a $6^+$-face. By R8 and R9 together with Corollary \ref{cor:poor-path}, $f_i$ transfers at least $\frac{1}{4}$ to $v$. Thus, $\mu^*(v)\geq 7+\frac{1}{4}-5\times 1-3\times\frac{3}{4}> 0$ after $v$ transfers $1$  to each $x_i$ by R1; at most $\frac{3}{4}$   to each of its neighbours other than $x_i$'s by R2-R8; $\frac{1}{8}$ to each face containing $vu$ by R8 when $v$ has a $6^+$-neighbour $u$ other than $6(4)$-vertex. \medskip

Let $n_2(v)=6$. If $v$ has no $3(1)$-neighbour, then $\mu^*(v)\geq 7-6\times 1-2\times\frac{1}{2}= 0$   after $v$ sends $1$  to each $x_i$ by R1; at most $\frac{1}{2}$  to each of its neighbours other than $x_i$'s by R2-R7; $\frac{1}{8}$ to each face containing $vu$ by R8 when $v$ has a $6^+$-neighbour $u$ other than $6(4)$-vertex. We may further suppose that $v$ has a $3(1)$-neighbour. This implies that the vertex $v$ has at most one expendable $2$-neighbour, i.e., $v$ is weak-adjacent to at least five $7^+$-neighbours.
Since $v$ is weak-adjacent to five $7^+$-vertices,  there exist two faces $f_{i_1},f_{i_2}$ for $i_1,i_2\in [8]$ such that each of them is either a  $5$-face having two adjacent $6^+$-vertices other than $6(4)$-vertices  or a $6^+$-face.   It follows that each of those faces transfers at least $\frac{1}{4}$ to $v$ by R8 and R9 together with Corollary \ref{cor:poor-path}. Thus, $\mu^*(v)\geq 7+2\times\frac{1}{4}-6\times 1-2\times\frac{3}{4}= 0$  after $v$ sends $1$  to each $x_i$ by R1; at most $\frac{3}{4}$ to each of its neighbours other than $x_i$'s by R2-R7;  $\frac{1}{8}$ to each face containing $vu$ by R8 when $v$ has a $6^+$-neighbour $u$ other than $6(4)$-vertex. \medskip

Let $n_2(v)=7$. Note that if $v$ is weak-adjacent to two $6^-$-vertices $y_p$ and $y_q$, then both  $x_p$ and $x_q$ would be expendable (also light) vertices, and this way, all $x_i$'s are light vertices as well. By Lemma \ref{lem:light-heavy}, $v$ is heavy, and so $v$ must have a heavy $8$-neighbour, say $z$, by Lemma \ref{lem:DplusS}. In particular, all neighbours of $z$ must be heavy as well, since otherwise, it would be $D(v)\leq \D+6+|S|$, where $S$ is the set of light vertices of distance at most $2$ from $v$, a contradiction by Lemma \ref{lem:DplusS}. It then follows from applying R8 that $v$ receives $\frac{1}{4}$ from $z$. 
Thus, $\mu^*(v)\geq 7+\frac{1}{4}-7\times 1-\frac{1}{4}= 0$    after $v$ sends $1$  to each $x_i$ by R1;  $\frac{1}{8}$ to each face containing $vz$ by R8. We may therefore suppose that $v$ is weak-adjacent to at most one $6^-$-vertex. Namely, $v$ is weak-adjacent to six $7^+$-vertices, and this implies that there exist four faces incident to $v$ such that each of them is either a  $5$-face having two adjacent $6^+$-vertices other than $6(4)$-vertices  or a $6^+$-face.   It follows that $v$ receives at least  $\frac{1}{4}$ from each of those faces by R8 and R9 together with Corollary \ref{cor:poor-path}.  Thus, $\mu^*(v)\geq 7+4\times\frac{1}{4}-7\times 1-\frac{3}{4}> 0$ after $v$ sends $1$  to each $x_i$ by R1; at most $\frac{3}{4}$ to each of its neighbours other than $x_i$'s by R2-R8;  $\frac{1}{8}$ to each face containing $vu$ by R8 when $v$ has a $6^+$-neighbour $u$ other than $6(4)$-vertex. \medskip

Let $n_2(v)=8$. Recall that if $v$ is weak-adjacent to two $6^-$-vertices, then all $x_i$'s would be light vertices. However, this is not possible by Lemma \ref{lem:light-heavy} and Lemma \ref{lem:DplusS}.
Thus, $v$ is weak-adjacent to at most one $6^-$-vertex. Namely, $v$ is weak-adjacent to six $7^+$-vertices, and this implies that there exist six faces incident to $v$ such that each of them is either a  $5$-face having two adjacent $6^+$-vertices other than $6(4)$-vertices  or a $6^+$-face.   It follows that $v$ receives at least  $\frac{1}{4}$ from each of those faces by R8 and R9 together with Corollary \ref{cor:poor-path}.  Thus, $\mu^*(v)\geq 7+6\times\frac{1}{4}-8\times 1> 0$  after $v$ sends $1$ to its each neighbours by R1. \newpage





\section{The Proof of Theorem \ref{thm:main2}} \label{sec:premECE2}

\subsection{The Structure of Minimum Counterexample} \label{sub:premECE2}~~\medskip

Let  $G$ be a minimal counterexample to Theorem \ref{thm:main2} such that $G$ does not admit any $2$-distance coloring with $\D+6$ colors, but any proper subgraph of $G$ does. So $G$ is a $(\D+6)$-critical graph. Obviously,  $G$ is connected and $\delta(G)\geq 2$.  
Similarly as Section \ref{sec:premECE}, we abbreviate $6$-light, $6$-heavy, $6$-expendable vertices to light, heavy and expendable vertices throughout this section.

We  begin with outlining some structural properties that $G$ must carry. The following can be easily obtained from Lemma \ref{lem:light-heavy} and Corollary  \ref{cor:Dplus1}.

\begin{cor}\label{cor:properties2}
\begin{itemize}
\item[$(a)$] $G$ has no adjacent $2$-vertices.
\item[$(b)$] Every $2$-vertex having a $k$-neighbour for $k\leq 5$ is light.
\item[$(c)$] If a $3$-vertex has a $2$-neighbour, then its other neighbours are $5^+$-vertices.
\item[$(d)$] A $4$-vertex cannot have three $2$-neighbours.
\item[$(e)$] A $5$-vertex cannot have four $2$-neighbours.
\item[$(f)$] If a vertex $v$ has a light neighbour, then $D(v)\geq \D+7$.
\end{itemize}
\end{cor}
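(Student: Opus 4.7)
The plan is to derive each of (a)--(f) as a one-step consequence of Lemma \ref{lem:light-heavy} or Corollary \ref{cor:Dplus1}, specialised to $k=6$, together with the definitions of light and expendable. Before beginning I would note the running observation that a $2$-vertex adjacent to any $5^-$-vertex is automatically $6$-expendable (by definition, since $5\le k-1$) and, since $D$ of such a vertex is at most $\D+5<\D+6$, also light.

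I would handle (b) and (f) first because they are essentially definitional: (b) is exactly the observation just made, and (f) is the verbatim statement of Corollary \ref{cor:Dplus1} with $k=6$. Parts (a), (d), (e) then follow a single template: exhibit a configuration in which every incident $2$-vertex is forced to be light and expendable, and then apply Lemma \ref{lem:light-heavy} to get a contradiction. For (a), each of two adjacent $2$-vertices has $D\le 4<\D+6$, hence both are light, but Lemma \ref{lem:light-heavy} says the neighbour of a light vertex must be heavy; contradiction. For (d), the three $2$-neighbours of a $4$-vertex $v$ are expendable, so $e_6(v)\ge 3$ and Lemma \ref{lem:light-heavy} forces $v$ heavy, which would require $D(v)\ge \D+9$; but $D(v)\le 6+\D$, a contradiction. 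Part (e) is the same argument with the bookkeeping $e_6(v)\ge 4$ and $D(v)\ge \D+10$ versus $D(v)\le 8+\D$.

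For (c), let $v$ be a $3$-vertex with $2$-neighbour $u$ and other neighbours $w_1,w_2$. Then $u$ is expendable, hence $e_6(v)\ge 1$, and Lemma \ref{lem:light-heavy} forces $v$ heavy, so $D(v)\ge \D+7$. Since $d(u)=2$, this gives $d(w_1)+d(w_2)\ge \D+5\ge 15$ using $\D\ge 10$; if either $w_i$ had degree at most $4$, the other would need degree exceeding $\D$, which is impossible.

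I do not anticipate a genuine obstacle here; the paper itself advertises the corollary as an easy consequence of the two cited results. The only bookkeeping point worth flagging is that $e_6(v)$ counts expendable vertices in $N_2(v)$, not merely neighbours, so the expendable $2$-neighbours in (c)--(e) automatically contribute to $e_6(v)$ without any further argument.
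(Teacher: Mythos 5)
Your derivations are correct and follow exactly the route the paper intends, since it presents this corollary as an immediate consequence of Lemma \ref{lem:light-heavy} and Corollary \ref{cor:Dplus1} specialised to $k=6$, with the expendability of $2$-vertices adjacent to $5^-$-vertices doing the bookkeeping just as you describe. One harmless slip: in (a), a $2$-vertex $u$ adjacent to another $2$-vertex has $D(u)\le \Delta+2$ (its second neighbour need not be a $2$-vertex), not $D(u)\le 4$; this still gives $D(u)<\Delta+6$ (or one can note $u$ is expendable outright), so both vertices are light and the contradiction with Lemma \ref{lem:light-heavy} goes through unchanged.
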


As $k=6$, we may rewrite Lemma \ref{lem:7-heavy} as follows.

\begin{cor}\label{cor:7-heavy2}
Every $6^-$-vertex having a $2$-neighbour is heavy. In particular, every $5$-vertex having a $3(1)$-neighbour is heavy as well.
\end{cor}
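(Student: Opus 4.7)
The plan is to recognize that Corollary \ref{cor:7-heavy2} is simply the specialization of Lemma \ref{lem:7-heavy} at the parameter value $k=6$. Since $G$ is a $(\D+6)$-critical graph, and since the preamble of this section fixes the convention that the words \emph{light}, \emph{heavy}, and \emph{expendable} abbreviate $6$-light, $6$-heavy, and $6$-expendable, plugging $k=6$ into Lemma \ref{lem:7-heavy} yields both claims verbatim: every $6^-$-vertex with a $2$-neighbour is heavy, and every $(k-1)=5$-vertex with a $3(1)$-neighbour is heavy. The bulk of the proof therefore consists of invoking Lemma \ref{lem:7-heavy} with $k=6$.

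If one prefers to reproduce the argument in place, the steps are identical to those of Lemma \ref{lem:7-heavy}. For the first claim, let $v$ be a $6^-$-vertex with a $2$-neighbour $u$. If $d(v)\le 5$, then $D(u)\le \D+d(v)<\D+6$, so $u$ is light and Lemma \ref{lem:light-heavy} forces $v$ to be heavy. If $d(v)=6$, assume for contradiction that $v$ is light, use $(\D+6)$-criticality to pick a $2$-distance $(\D+6)$-coloring of $G-uv$, decolor $u$ and $v$, recolor $u$ (which now has at most $\D+d(v)-1=\D+5$ forbidden colors), and extend the coloring across $v$ via Corollary \ref{cor:light-ext}.

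For the ``in particular'' clause, let $v$ be a $5$-vertex with a $3(1)$-neighbour $u$ whose $2$-neighbour is $w$. Suppose $v$ is light; take a $2$-distance $(\D+6)$-coloring of $G-uv$ by $(\D+6)$-criticality, decolor $u$, $v$ and $w$, then recolor $u$ (with at most $\D+d(v)+2-1=\D+6$ forbidden colors, leaving at least one available), extend to $v$ via Corollary \ref{cor:light-ext}, and finally color $w$, which as a $2$-vertex with a $3$-neighbour has at most $\D+3$ forbidden colors. There is no real obstacle: the only thing to check is that every constant substitutes correctly under $k\mapsto 6$, which it does.
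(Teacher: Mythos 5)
Your main route is exactly the paper's: Corollary \ref{cor:7-heavy2} is given there with no separate proof, precisely as the specialization of Lemma \ref{lem:7-heavy} to $k=6$ in a $(\Delta+6)$-critical graph, so invoking that lemma (with the section's convention that heavy means $6$-heavy) is all that is needed, and your proposal is correct. One small slip in your optional in-place reproduction of the $5$-vertex case: since both $v$ and $w$ are decolored, $u$ has at most $D(u)-2\leq \Delta+d(v)+2-2=\Delta+5$ forbidden colors, not $\Delta+6$; as written, ``at most $\Delta+6$ forbidden colors out of $\Delta+6$'' would not by itself guarantee an available color, but the corrected count (which is what the paper uses) does.
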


\begin{prop}\label{prop:3-4-52} 
\begin{itemize}
\item[$(a)$] A $3(1)$-vertex cannot be adjacent to a $5^-$-vertex having a $2$-neighbour.
\item[$(b)$] A $4(2)$-vertex is adjacent to neither a $5^-$-vertex having two $2$-neighbours nor a $4^-$-vertex having a $2$-neighbour.
\end{itemize}
\end{prop}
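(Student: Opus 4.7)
The plan is to mirror the strategy of Proposition \ref{prop:3-4-5} (which handles the analogous statements in the $k=7$ regime), now applied with $k=6$: in each case, I will show that the assumed configuration forces $v$ to be heavy with $e_6(v)$ large enough that the required lower bound $D(v)\geq \D+6+e_6(v)$ contradicts the upper bound coming from the explicit degree sequence of $v$'s neighbours.

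For part $(a)$, I will suppose for contradiction that $v$ is a $3(1)$-vertex with $2$-neighbour $u$, and that $v$ has a $5^-$-neighbour $w$ with a $2$-neighbour $z$. By Corollary \ref{cor:properties2}$(c)$, every neighbour of $v$ other than $u$ is a $5^+$-vertex, so $w$ must be a $5$-vertex. Corollary \ref{cor:7-heavy2} then forces both $v$ and $w$ to be heavy, so in particular $D(v)\geq \D+6+e_6(v)$. Since $u$ is a $2$-vertex adjacent to the $3$-vertex $v$ and $z$ is a $2$-vertex adjacent to the $5$-vertex $w$, both are $6$-expendable; the girth-$5$ hypothesis rules out the triangle $v$-$w$-$z$, so $u\neq z$ and both lie in $N_2(v)$, giving $e_6(v)\geq 2$. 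This yields $D(v)\geq \D+8$, contradicting the explicit bound $D(v)\leq 2+5+\D=\D+7$.

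For part $(b)$, let $v$ be a $4(2)$-vertex with $2$-neighbours $u_1,u_2$; once again $v$ is heavy by Corollary \ref{cor:7-heavy2}. I will split into two sub-cases. In the first, $v$ is adjacent to a $5^-$-vertex $w$ with two $2$-neighbours $z_1,z_2$; in the second, $v$ is adjacent to a $4^-$-vertex $w$ with one $2$-neighbour $z$. In each sub-case the girth-$5$ condition makes the relevant $2$-vertices pairwise distinct (a coincidence such as $u_i=z_j$ would mean that the $2$-vertex has both $v$ and $w$ as its two neighbours, yielding the triangle $v$-$u_i$-$w$-$v$), they are all $6$-expendable since their unique non-$v$, non-$w$ endpoints $v,w$ have degree at most $5$, and they all lie in $N_2(v)$. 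In sub-case one this yields $e_6(v)\geq 4$ and hence $D(v)\geq \D+10$, while the explicit degree sum is at most $2+2+5+\D=\D+9$; in sub-case two we obtain $e_6(v)\geq 3$ forcing $D(v)\geq \D+9$, against $D(v)\leq 2+2+4+\D=\D+8$. Either way we reach a contradiction.

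The main (minor) obstacle is precisely the distinctness bookkeeping above: one must be careful that the $2$-vertices assembled from $N(v)$ and from $N(w)$ really are distinct contributors to $e_6(v)$, which is where the girth-$5$ assumption is used. Once that is settled, the remainder is a direct comparison of the lower bound coming from heaviness against the crude degree-sum upper bound.
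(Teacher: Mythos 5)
Your proposal is correct and follows exactly the route the paper intends: the paper omits this proof as ``similar to Proposition \ref{prop:3-4-5}'', whose argument is precisely your heaviness-versus-expendable-count contradiction (heaviness via Corollary \ref{cor:7-heavy2} giving $D(v)\geq \D+6+e_6(v)$, beaten by the crude degree-sum bound). Your extra bookkeeping that the relevant $2$-vertices are distinct because a coincidence would create a triangle is a detail the paper's model proof glosses over, but it is consistent with, not a departure from, that approach.
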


\begin{proof}
We omit  the proof since it is similar to Proposition \ref{prop:3-4-5}.
\end{proof}

We call a path $xyz$ as a \emph{poor path} if $6\leq d(y) \leq 9$, $2\leq d(x) \leq 3$ and $2\leq d(z) \leq 3$. If a poor path $xyz$ lies on the boundary of a face $f$, then the vertex $y$ is called \emph{$f$-poor vertex}.
It follows from the definition of the poor path that we can bound the number of those paths in a face.

\begin{cor}\label{cor:poor-path2}
Each face $f$ has at most  $ \floor[\big]{\frac{\ell(f)}{2}}$  $f$-poor vertices.  
\end{cor}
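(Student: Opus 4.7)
The plan is to exploit the structural fact that no two $2$-vertices of $G$ are adjacent, which is guaranteed by Corollary \ref{cor:properties2}(a). I would view the boundary of $f$ as a cyclic sequence of $\ell(f)$ vertices (or, more carefully, a closed walk of length $\ell(f)$ if $G$ is not $2$-connected). By the definition of a poor path, each $f$-poor vertex $y$ sits at the middle of a subpath $xyz$ on the boundary, where $x$ and $z$ are both $2$-vertices; that is, both boundary-neighbors of $y$ are $2$-vertices.

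The argument then has two short steps. First, since no two $2$-vertices are adjacent, consecutive positions on the boundary walk cannot both be occupied by $2$-vertices, so the total number of occurrences of $2$-vertices around the face is at most $\lfloor \ell(f)/2 \rfloor$. Second, fix an orientation of the boundary and assign to each $f$-poor vertex $y$ its immediate predecessor along the boundary; by the definition of poor path, this predecessor is a $2$-vertex, and since in a cyclic walk each position has a unique successor, this assignment is injective. Combining the two observations yields that the number of $f$-poor vertices on $f$ is at most the number of occurrences of $2$-vertices on the boundary of $f$, which is at most $\lfloor \ell(f)/2 \rfloor$.

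There is no real obstacle: the statement is the direct analogue of Corollary \ref{cor:poor-path} from Section \ref{sec:premECE}, the only difference being that the degree range for the middle vertex of a poor path is now $6\leq d(y)\leq 9$ instead of $7\leq d(y)\leq 8$. The degree range of $y$ plays no role in the counting argument, which depends only on the non-adjacency of $2$-vertices (Corollary \ref{cor:properties2}(a)) and on the fact that both flanking vertices of an $f$-poor vertex are $2$-vertices. Hence the same proof transfers \emph{verbatim}, and in fact the paper may simply remark that the result follows by the same argument as Corollary \ref{cor:poor-path}.
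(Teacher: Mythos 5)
Your proposal is correct and matches the paper's reasoning: the paper derives this corollary directly from the definition of a poor path together with the fact that $G$ has no adjacent $2$-vertices (Corollary \ref{cor:properties2}(a)), exactly the counting you make explicit. Your observation that the degree range of the middle vertex is irrelevant to the argument is also exactly why the paper treats this as the same statement as Corollary \ref{cor:poor-path}.
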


In the rest of the paper, we will apply discharging to show that $G$ does not exist. We assign to each vertex $v$ a charge $\mu(v)=\frac{3d(v)}{2}-5$ and to each face $f$ a charge $\mu(f)=\ell(f)-5$. By Euler's formula, we have
$$\sum_{v\in V}\left(\frac{3d(v)}{2}-5\right)+\sum_{f\in F}(\ell(f)-5)=-10$$

We next present some rules and redistribute accordingly. Once the discharging finishes, we check the final charge $\mu^*(v)$ and $\mu^*(f)$. If $\mu^*(v)\geq 0$ and $\mu^*(f)\geq 0$, we get a contradiction that no such a counterexample can exist.

\subsection{Discharging Rules} \label{sub:2} ~\medskip

We apply the following discharging rules.

\begin{itemize}

\item[\textbf{R1:}] Every $2$-vertex receives $1$  from each of its neighbour.
\item[\textbf{R2:}] Every $3(1)$-vertex receives $\frac{1}{2}$  from each of its $5$-neighbour; $\frac{2}{3}$  from each of its $6$-neighbour; $\frac{3}{4}$  from each of its $(7\text{-} 8)$-neighbour; $\frac{5}{6}$  from each of its $9$-neighbour.
\item[\textbf{R3:}] Every light $3(0)$-vertex receives $\frac{1}{6}$  from each of its $9^-$-neighbour.
\item[\textbf{R4:}] Let $v$ be a heavy $3(0)$-vertex.
\begin{itemize}
\item[$(a)$] If $v$ is adjacent to a light $3$-vertex, then $v$ receives $\frac{1}{6}$  from each $5$-neighbour; $\frac{1}{3}$  from each $(6\text{-} 8)$-neighbour, $\frac{1}{2}$  from each $9$-neighbour.
\item[$(b)$] If $v$ is not adjacent to a light $3$-vertex, then $v$ receives  $\frac{1}{6}$  from each $5$-neighbour; $\frac{1}{4}$  from each $6$-neighbour; $\frac{1}{3}$  from each $7$-neighbour; $\frac{1}{2}$  from each $(8\text{-} 9)$-neighbour.
\end{itemize}
\item[\textbf{R5:}] Let $v$ be a $4$-vertex.
\begin{itemize}
\item[$(a)$] If $v$ is a $4(1)$-vertex adjacent to exactly one $3$-vertex, then $v$ receives  $\frac{1}{12}$  from each $6$-neighbour;  
$\frac{1}{6}$  from each $(7\text{-} 8)$-neighbour.
\item[$(b)$] If $v$ is a $4(2)$-vertex, then $v$ receives  $\frac{1}{2}$  from each $(5\text{-} 8)$-neighbour.
\end{itemize}

\item[\textbf{R6:}] Every $5(3)$-vertex receives $\frac{1}{2}$  from each $(7\text{-} 8)$-neighbour.

\item[\textbf{R7:}] Every $9$-vertex gives $\frac{1}{2}$  to each of its $k$-neighbour for $4\leq k \leq 6$.
\item[\textbf{R8:}] Every $10^+$-vertex gives $1$ to each of its $8^-$-neighbour.

\item[\textbf{R9:}] If a $7$-vertex $v$ is  adjacent to two $7^+$-vertices $u$ and $w$, then $v$ gives $\frac{1}{8}$ to each  face containing one of  $uv$, $wv$.

\item[\textbf{R10:}] If  an $8$-vertex $v$ is  adjacent to an $8$-vertex $u$, then $v$ gives $\frac{1}{8}$ to each  face containing $uv$.

\item[\textbf{R11:}] If a $9$-vertex $v$ is adjacent to a $7^+$-vertex $u$, then $v$ gives $\frac{1}{4}$ to each face containing $uv$. 

\item[\textbf{R12:}] If a $10^+$-vertex $v$ is adjacent to a $9^+$-vertex $u$, then $v$ gives $\frac{1}{2}$ to each face containing $uv$. 

\item[\textbf{R13:}] Every face $f$ transfers its positive charge equally to  its incident $f$-poor vertices.

\end{itemize}

\vspace*{2em}

\noindent
\textbf{Checking} $\mu^*(v), \mu^*(f)\geq 0$, for $v\in V(G), f\in F(G)$\medskip

Clearly $\mu^*(f)\geq 0$ for each $f\in F(G)$, since every face transfers its positive charge equally to its incident $f$-poor vertices by R10. \medskip

We pick a vertex $v\in V(G)$ with $d(v)=k$. \medskip

\textbf{(1).} Let $k=2$.  By Corollary \ref{cor:properties2}(a), $v$ is adjacent to two $3^+$-vertices. It follows from applying R1 that $v$ receives $1$  from each of its neighbours, and so  $\mu^*(v)\geq 0$. \medskip 

\textbf{(2).} Let $k=3$. By Corollary \ref{cor:properties2}(c), $v$ has at most one $2$-neighbour. 
First, suppose that $v$ has a $2$-neighbour $u$. Note that the other neighbours of $v$ must be $5^+$-vertices by Corollary \ref{cor:properties2}(c). In fact, if $z_1$ and $z_2$ are the neighbours of $v$ other than $u$, then $d(z_1)+d(z_2)\geq 15$ since $v$ is heavy by Corollary \ref{cor:7-heavy2}. 
We then conclude that either $z_1$ and $z_2$ are $6^+$-vertices or one of $z_1,z_2$ is a $10^+$-vertex. 
It follows from applying R2 and R8 that $v$ receives totally at least $\frac{3}{2}$ from its $5^+$-neighbours. Hence, we have $\mu^*(v)\geq -\frac{1}{2}+\frac{3}{2}- 1= 0$ after $v$ sends $1$ to $u$ by R1.

Let us now assume that $v$ has no $2$-neighbour, i.e, $v$ is a $3(0)$-vertex. If $v$ is a light vertex, then each neighbour of $v$ is heavy by Lemma \ref{lem:light-heavy}. Thus $v$ receives at least $\frac{1}{6}$ from each of it neighbours by R3 and R8, and so $\mu^*(v)\geq -\frac{1}{2}+3\times\frac{1}{6}= 0$. We therefore suppose that $v$ is heavy. If $v$ has a light $3$-neighbour, then the other neighbours of $v$, say $z_1$ and $z_2$, must be $4^+$-vertices by Corollary \ref{cor:properties2}(f). In particular, we have $d(z_1)+d(z_2)\geq 14$.   
It follows from applying R4(a) and R8 that $v$ receives totally at least $\frac{2}{3}$ from its $5^+$-neighbours. Thus, $\mu^*(v)\geq -\frac{1}{2}+\frac{2}{3}- \frac{1}{6}= 0$ after $v$ sends $\frac{1}{6}$ to its light $3$-neighbour by R3.  
Suppose now that $v$ has no light $3$-neighbour. Denote by $z_1,z_2,z_3$ the neighbours of $v$. Since $v$ is heavy, we have $d(z_1)+d(z_2)+d(z_3)\geq \D+6\geq 16$. By applying R4(b) and R8, we deduce that
$v$ receives totally at least $\frac{1}{2}$ from its $5^+$-neighbours. Hence, $\mu^*(v)\geq -\frac{1}{2}+\frac{1}{2} = 0$. \medskip

\textbf{(3).} Let $k=4$. The initial charge of $v$ is $\mu(v)=\frac{3d(v)}{2}-5=1$.  If $v$ has no $2$-neighbour, then $\mu^*(v)\geq 1-3\times\frac{1}{6}> 0$ after $v$ sends $\frac{1}{6}$ to each of its light $3(0)$-neighbours by R3. Therefore, we assume that $v$ has at least one $2$-neighbour. In fact, $v$ can have at most two $2$-neighbours by Corollary \ref{cor:properties2}(d). Thus $1\leq n_2(v)=t \leq 2$. Denote by $x_1,x_2,\ldots,x_t$ the $2$-neighbours of $v$. Note that $v$ is a heavy vertex by Corollary \ref{cor:7-heavy2}. Since the $2$-neighbours of $v$ are light vertices by Corollary \ref{cor:properties2}(b), we have  $D(v)\geq \D+7\geq 17$ by Corollary \ref{cor:properties2}(f).
Besides, $v$  cannot have any $3(1)$-neighbour by Proposition \ref{prop:3-4-52}(a).\medskip

Let $n_2(v) = 1$. Observe that $v$ has at most two $3$-neighbours, since $v$ is heavy. Also, $v$ gives charge only to its $2$- and $3$-neighbours by R1 and R3. By this way, if $v$ has no $3$-neighbour, then  $\mu^*(v)\geq 1-1= 0$ after $v$ transfers $1$  to $x_1$ by R1.  
If $v$ has exactly one $3$-neighbour $u$, then  $v$ has either two $6$-neighbours or at least one $7^+$-neighbour, as  $D(v)\geq 17$. So, $v$ receives totally at least $\frac{1}{6}$ from its $6^+$-neighbours by R5(a).  Hence, we have $\mu^*(v)\geq 1+2\times\frac{1}{12}-1-\frac{1}{6}=0$ after $v$ sends $1$ to $x_1$ by R1;  $\frac{1}{6}$ to $u$ by R3.
If $v$ has two $3$-neighbours $u$ and $w$, then the last neighbour of $v$ must be $9^+$-vertex, as  $D(v)\geq 17$. In such a case, $v$ receives at least $\frac{1}{2}$ from its $9^+$-neighbour by R7-R8. Hence $\mu^*(v)\geq 1+\frac{1}{2}-1-2\times\frac{1}{6}> 0$ after $v$ sends $1$ to $x_1$ by R1 and  $\frac{1}{6}$  to each of  $u,w$ by R3. \medskip  

Let $n_2(v) = 2$. Denote by $u$ and $w$ the neighbour of $v$ other than $x_1$ and $x_2$. Since $v$ is heavy, and has two expendable $2$-neighbours, we have  $D(v)\geq\D+6+2 \geq 18$ by Lemma \ref{lem:DplusS}, which implies that  each of $u,w$ is a $4^+$-vertex. In fact, either both $u$ and $w$ are $5^+$-vertices or at least one of $u,w$ is a $10^+$-vertex. 
In each cases, $v$ receives totally at least $1$ from $u$ and $w$ by applying R5(b) and R8.  Thus,  $\mu^*(v)\geq1-2\times\frac{1}{2}-2\times 1 =0$ after $v$ sends $1$ to each of $x_1,x_2$ by R1.  \medskip

\textbf{(4).} Let $k=5$. We have $\mu(v)=\frac{3d(v)}{2}-5=\frac{5}{2}$. Notice that if $v$ has no $2$-neighbour, then $v$ gives at most $\frac{1}{2}$ to each of its neighbours by R2-R5, so $\mu^*(v)\geq \frac{5}{2}-5\times \frac{1}{2}=0$. Thus, we may assume that $1\leq n_2(v)=t \leq 3$ where the last inequality comes from Corollary \ref{cor:properties2}(e). Denote by $x_1,x_2,\ldots,x_t$ the $2$-neighbours of $v$. Note that $v$ is a heavy vertex by Corollary \ref{cor:7-heavy2}. Since the $2$-neighbours of $v$ are light vertices by Corollary \ref{cor:properties2}(b), we have  $D(v)\geq \D+7\geq 17$ by Corollary \ref{cor:properties2}(f). \medskip

Let $n_2(v) = 1$. Observe that $v$ has a neighbour $u$ different from $2$-, $3$- and $4(2)$-vertices by Lemma \ref{lem:DplusS}, since $v$ is heavy. Clearly, $u$ does not receive any charge from $v$. Thus, $\mu^*(v)\geq \frac{5}{2}-1-3\times \frac{1}{2}=0$ after $v$ sends $1$ to $x_1$ by R1;  at most $\frac{1}{2}$  to each of its neighbours other than $x_1$ and $u$ by R2-R5. \medskip

Let $n_2(v) = 2$. Clearly, no neighbour of $v$ can be a $3(1)$- or $4(2)$-vertex by Proposition \ref{prop:3-4-52}(a)-(b).  Moreover, $v$ has at most two $3(0)$-neighbours, since $v$ is heavy. This implies that $v$ has a neighbour which does not receive any charge from $v$.  It then follows  $\mu^*(v)\geq \frac{5}{2}-2\times 1-2\times \frac{1}{6}>0$ after $v$ sends $1$ to each of $x_1,x_2$ by R1 and at most $\frac{1}{6}$ to each of its $3$-neighbours by R3-R4.  \medskip 

Let $n_2(v) = 3$.  Denote by $z_1$ and $z_2$ the neighbours of $v$ other than  $x_1,x_2,x_3$.  Clearly, none of $z_i$'s can be a $3(1)$- or $4(2)$-vertex by Proposition \ref{prop:3-4-52}(a)-(b). If one of $z_i$'s is a $3(0)$-vertex, then $v$ would have a $10^+$-neighbour by Lemma \ref{lem:DplusS}, since $v$ is heavy, and so $v$ receives $1$ from its $10^+$-neighbour by R8. Thus,  $\mu^*(v)\geq \frac{5}{2}+1-3\times 1- 2\times\frac{1}{6}>0$  after $v$ transfers $1$  to each $x_i$ by R1 and $\frac{1}{6}$ to each of its $3(0)$-neighbours by R3-R4.  If $v$  has no $3$-neighbour, then at least one of $z_i$'s should be $7^+$-vertex by Lemma \ref{lem:DplusS}, and so $v$ receives at least $\frac{1}{2}$ from its $7^+$-neighbour by R6-R8.  It then follows that  $\mu^*(v)\geq \frac{5}{2}+\frac{1}{2}-3\times 1=0$  after $v$ transfers $1$  to each $x_i$ by R1.\medskip

\textbf{(5).} Let $k=6 $. The initial charge of $v$ is $\mu(v)=\frac{3d(v)}{2}-5=4$.  We first observe that if $v$ has no $2$-neighbour, then  $\mu^*(v)\geq 4-6\times \frac{2}{3}= 0$ after $v$ sends at most $\frac{2}{3}$ to each of its neighbour by R2-R5. Thus we may assume that $v$ has at least one $2$-neighbour. It then follows from Corollary \ref{cor:7-heavy2} that $v$ is a heavy vertex, and this infers 
$1\leq n_2(v)=t \leq 5$. Denote by $x_1,x_2,\ldots,x_t$ the $2$-neighbours of $v$, and let $y_i$ be the other neighbour of $x_i$ different from $v$. Let $f_1,f_2\ldots,f_6$ be faces incident to $v$, and suppose that they have a clockwise order on the plane.\medskip

Let $n_2(v)=1$. Recall that $v$ is heavy. Therefore, if $v$ has four $3(1)$-neighbours, then the last neighbour of $v$ must be $6^+$-vertex, which does not receive any charge from $v$. Thus $\mu^*(v)\geq 4-1-4\times \frac{2}{3}> 0$ after $v$ sends  $1$  to $x_1$ by R1; at most $\frac{2}{3}$ to each of its $3(1)$-neighbours by R2. If $v$ has three $3(1)$-neighbours, then $v$ has either a $4(0)$- or a $5^+$-neighbour, and neither receives any charge from $v$. Hence, $\mu^*(v)\geq 4-1-4\times \frac{2}{3}> 0$ after $v$ sends  $1$  to $x_1$ by R1; at most $\frac{2}{3}$ to each of its $3^+$-neighbour other than $4(0)$- and $5^+$-vertices by R2-R5. Finally, if $v$ has at most two $3(1)$-neighbours, then $\mu^*(v)\geq 4-1-2\times \frac{2}{3}-3\times \frac{1}{2}> 0$ after $v$ sends  $1$  to $x_1$ by R1; at most $\frac{2}{3}$ to each of its $3(1)$-neighbours by R2; at most $\frac{1}{2}$ to each of its other neighbours by R3-R5. \medskip

Let $n_2(v)=2$. Similarly as above,  if $v$ has three $3(1)$-neighbours, then the last neighbour of $v$ must be a $6^+$-vertex. Thus $\mu^*(v)\geq 4-2\times 1-3\times \frac{2}{3}= 0$ after $v$ sends  $1$  to each $x_i$ by R1; at most $\frac{2}{3}$ to each of its $3(1)$-neighbours by R2. If $v$ has two $3(1)$-neighbours, then $v$ has either a $4(0)$- or a  $5^+$-neighbour, and neither receives any charge from $v$. Hence, $\mu^*(v)\geq 4-2\times 1-3\times \frac{2}{3}= 0$ after $v$ sends  $1$  to each $x_i$ by R1; at most $\frac{2}{3}$ to each of its $3^+$-neighbour other than $4(0)$- and $5^+$-vertices by R2-R5.
Finally, suppose that $v$ has at most one $3(1)$-neighbour. Then $v$ must have a neighbour different from $2$-, $3(1)$- and $4(2)$-vertex, which receives at most $\frac{1}{3}$ from $v$ by R3-R5. Thus $\mu^*(v)\geq 4-2\times 1-\frac{2}{3}-2\times\frac{1}{2}-\frac{1}{3}= 0$ after $v$ sends  $1$  to each $x_i$ by R1; $\frac{2}{3}$ to its $3(1)$-neighbour by R2; $\frac{1}{2}$ to each of its $4(2)$-neighbour by R5(b); at most $\frac{1}{3}$ to each of its other neighbours by R3-R5. \medskip

Let $n_2(v)=3$. First we assume that $v$ has a $3(1)$-neighbour $u$. 
If one of $y_i$'s is a $9^-$-vertex, then $x_i$ would be an expendable (also light) vertex. In such a case,  all the other $x_i$'s would be light vertices as well. 
However, this implies that $D(u)\leq \D+4+|S|$, where $S$ is the set of light vertices of distance at most $2$ from $u$, a contradiction by Lemma \ref{lem:DplusS} and Corollary \ref{cor:7-heavy2}. 
Thus we may assume that all $y_i$'s are $10^+$-vertices. 
Note that the neighbour of $u$ different from $v$ and $2$-vertex is a $9^+$-vertex as $u$ is heavy by Corollary \ref{cor:7-heavy2}.  Since $v$ is weak-adjacent to three $10^+$-vertices, and star-adjacent to a $9^+$-vertex, we conclude that there exists a face $f_{i}$ for $i\in [6]$ such that it is either a $5$-face having two adjacent vertices $x$ and $y$ with $d(x)\geq 9$ and $d(y)\geq 10$ or a $6^+$-face. 
If $f_i$ has such vertices $x,y$, then $f_i$ gets totally $\frac{3}{4}$ from $x,y$ by R11-R12, and so $f_i$ sends it to $v$ by R13. On the other hand, if $f_i$ is a $6$-face, then $f_i$ transfers at least $\frac{1}{2}$ to $v$ by applying R13 together with Corollary \ref{cor:poor-path2}.
Consequently, $v$ receives at least $\frac{1}{2}$ from $f_i$.
Recall that if $v$ has two $3(1)$-neighbours, then the last neighbour of $v$ is a $6^+$-vertex, since $v$ is heavy. Moreover, $v$ has a neighbour different from $2$-, $3(1)$- and $4(2)$-vertex, which receives at most $\frac{1}{3}$ from $v$ by R3-R5.
Hence, if $v$ has two $3(1)$-neighbours, then $\mu^*(v)\geq 4+\frac{1}{2}-3\times 1-2\times \frac{2}{3}> 0$  after  $v$ transfers $1$  to each $x_i$ by R1;  $\frac{2}{3}$ to each of its $3(1)$-neighbour  by R2. If $v$ has at most one $3(1)$-neighbour, then $\mu^*(v)\geq 4+\frac{1}{2}-3\times 1- \frac{2}{3}-\frac{1}{2}-\frac{1}{3}= 0$  after  $v$ transfers $1$  to each $x_i$ by R1;  $\frac{2}{3}$ to each of its $3(1)$-neighbour  by R2; $\frac{1}{2}$ to each of its $4(2)$-neighbour by R5(b); at most $\frac{1}{3}$ to each of its other neighbours by R3-R5. 

Now we assume that $v$ has no $3(1)$-neighbour. Clearly, $v$ has a $6^+$-neighbour when  $v$ has two $4(2)$-neighbours, since $v$ is heavy. Moreover,  if $v$ has at most one $4(2)$-neighbour, then $v$ would have either a $4(0)$- or  a $5^+$-neighbour, and neither receives any charge from $v$. Hence, we have $\mu^*(v)\geq 4-3\times 1- 2\times \frac{1}{2}= 0$ or $\mu^*(v)\geq 4-3\times 1-  \frac{1}{2}-\frac{1}{3}> 0$ after $v$ sends  $1$  to each $x_i$ by R1; at most $\frac{1}{2}$ to each of its $4(2)$-neighbours by R5(b); at most $\frac{1}{3}$ to each of its other neighbour by R3-R5. \medskip

Let $n_2(v)\geq 4$.   
Similarly as above, if one of $y_i$'s is a $9^-$-vertex, then $x_i$ would be expendable (also light) vertex, and so all the other $x_i$'s would be light vertices as well. It follows from Lemma \ref{lem:DplusS} that $D(v)\geq \D+6+|S|$, where $S$ is the set of light vertices of distance at most $2$ from $v$. Then $v$ must have a $6^+$-neighbour, and $v$ cannot have any $3(1)$- or $4(2)$-neighbour by Corollary \ref{cor:7-heavy2}. In particular, $v$ has a $9^+$-neighbour when $v$ has a $3(0)$- or $4(1)$-neighbour. By R7-R8, $v$ receives at least $\frac{1}{2}$ from its $9^+$-neighbours. Thus we have $\mu^*(v)\geq 4-4\times 1> 0$ or $\mu^*(v)\geq 4-4\times 1- \frac{1}{3}+ \frac{1}{2}> 0$ after $v$ sends  $1$  to each $x_i$ by R1; at most $\frac{1}{3}$ to each of its $3(0)$- and $4(1)$-neighbours by R3-R5.

We may further assume that all $y_i$'s are $10^+$-vertices. 
Since $v$ is weak adjacent to four $10^+$-vertex, we conclude that there exist $f_{i_1},f_{i_2}$ for $i_1,i_2\in [6]$ such that it is either a $5$-face having two adjacent $10^+$-vertices $x$ and $y$ or a $6^+$-face. It follows that each of those faces transfers at least $1$ to $v$ by applying R12 and R13 together with Corollary \ref{cor:poor-path2}. Hence, $\mu^*(v)\geq 4+2-6\times 1= 0$ after  $v$ transfers at most $1$  to each of its neighbour by R1-R5. \medskip

\textbf{(6).} Let $k=7$. The initial charge of $v$ is $\mu(v)=\frac{3d(v)}{2}-5=\frac{11}{2}$. Note that if $v$ has two $7^+$-neighbours $u$ and $w$, then $v$ sends $\frac{1}{8}$ to each face containing one of $vu$, $vw$ by R9. Consequently, it follows that $\mu^*(v)\geq \frac{11}{2}-5\times 1-4\times \frac{1}{8}= 0$ after  $v$ transfers $1$  to each of its neighbours other than $u,w$ by R1-R6. Thus we further assume that $v$ has at most one $7^+$-neighbour, i.e., the rule R9 cannot be applied to $v$. Remark that any $3^+$-vertex  receives at most $\frac{3}{4}$ from its $7$-neighbour by R2-R6. Thus  $\mu^*(v)\geq \frac{11}{2}-1-6\times \frac{3}{4}= 0$ when  $v$ has at most one $2$-neighbour.  We may therefore assume that $2\leq n_2(v)=t \leq 7$. Denote by $x_1,x_2,\ldots,x_t$ the $2$-neighbours of $v$, and let $y_i$ be the other neighbour of $x_i$ different from $v$.  Let $z_1,z_2,\ldots,z_{7-t}$ be the neighbour of $v$ other than $x_1,x_2,\ldots,x_t$.   Let $f_1,f_2\ldots,f_7$ be faces incident to $v$, and suppose that they have a clockwise order on the plane. Observe that any vertex different from $2$- and $3(1)$-vertex receives at most $\frac{1}{2}$ from its $7$-neighbour by R3-R6.\medskip

Let $n_2(v)=2$. If $v$ has a neighbour other than $2$- and $3(1)$-vertex, then   $\mu^*(v)\geq \frac{11}{2}-2\times 1-4\times \frac{3}{4}-\frac{1}{2}= 0$ after  $v$ transfers $1$  to each $x_i$ by R1;   $\frac{3}{4}$ to each of its $3(1)$-neighbours by R2; at most $\frac{1}{2}$  to each of its other neighbours by R3-R6.        
We may therefore  assume that all neighbours of $v$ other than $x_i$'s are $3(1)$-vertices, i.e., $v$ has five $3(1)$-neighbours. In this case, $v$ is a light vertex, since $D(v)=19 < \D+6+e_6(v)$ with $e_6(v)\geq 5$. So, all $x_i$'s must be heavy by Lemma \ref{lem:light-heavy}. This means that each $y_i$ is a $10^+$-vertex by Corollary \ref{cor:Dplus1}. 
On the other hand, the neighbour of each $z_i$ different from $v$ and $2$-vertex is a $9^+$-vertex by Corollary \ref{cor:7-heavy2} together with Lemma \ref{lem:DplusS}. Thus $v$ is star-adjacent to five $9^+$-vertices. Since $v$ is weak-adjacent to two $10^+$-vertices and star-adjacent to five $9^+$-vertex, we conclude that there exist a face $f_{i}$ for $i\in [7]$ such that it is either a $5$-face having two adjacent $9^+$-vertices $x$ and $y$ or a $6^+$-face.
If $f_i$ has such two vertices $x,y$, then it gets totally $\frac{1}{2}$ from $x$ and $y$ by R11-R12. It then follows from applying R13 together with Corollary \ref{cor:poor-path2} that $f_i$ transfers at least $\frac{1}{2}$ to $v$.
Hence,  $\mu^*(v)\geq \frac{11}{2}+\frac{1}{2}-2\times 1-5\times \frac{3}{4}> 0$ after  $v$ transfers $1$  to each $x_i$ by R1;  $\frac{3}{4}$ to each of its $3(1)$-neighbour  by R2. \medskip

Let $n_2(v)=3$. If $v$ has at most two $3(1)$-neighbours, then  $\mu^*(v)\geq \frac{11}{2}-3\times 1-2\times \frac{3}{4}-2\times \frac{1}{2}= 0$ after  $v$ transfers $1$  to each $x_i$ by R1;   $\frac{3}{4}$ to each of its $3(1)$-neighbour by R2; at most $\frac{1}{2}$  to each of its other neighbours by R3-R6.  
We now suppose that $v$ has at least three $3(1)$-neighbours. If $v$ has also a $4^+$-neighbour $u$ different from a $4(2)$- and $5(3)$-vertex, then, similarly as above, we have  $\mu^*(v)\geq \frac{11}{2}-3\times 1-3\times \frac{3}{4}- \frac{1}{6}> 0$ after  $v$ transfers $1$  to each $x_i$ by R1;   $\frac{3}{4}$ to each of its $3(1)$-neighbour by R2; at most $\frac{1}{6}$  to $u$ by R5(a). If $v$ has no such $4^+$-neighbour, then $v$ would be a light vertex since $D(v) < \D+6+e_6(v)$. Thus, each $y_i$ must be $10^+$-vertex by Lemma \ref{lem:light-heavy} and Corollary \ref{cor:Dplus1}. Moreover, similarly as above,  $v$ is star-adjacent to three $9^+$-vertices. 
Since $v$ is weak-adjacent to three $10^+$-vertices and star-adjacent to three $9^+$-vertex, we conclude that there exist $f_{i}$ for $i\in [7]$ such that it is either a $5$-face having two adjacent vertices $x$ and $y$ with $d(x)\geq 9$ and $d(y)\geq 10$ or a $6^+$-face. Similarly as above, $f_i$ transfers at least $\frac{1}{2}$ to $v$ by applying R11-R13 together with Corollary \ref{cor:poor-path2}.
 Hence,  $\mu^*(v)\geq \frac{11}{2}+\frac{1}{2}-3\times 1-4\times \frac{3}{4}= 0$  after  $v$ transfers $1$  to each $x_i$ by R1;  at most $\frac{3}{4}$ to each of its other neighbours  by R2-R6.  \medskip

Let $n_2(v)=4$.  Clearly, if $v$ has no $3(1)$-neighbour, then   $\mu^*(v)\geq \frac{11}{2}-4\times 1-3\times \frac{1}{2}= 0$ after  $v$ transfers $1$  to each $x_i$ by R1;  at most $\frac{1}{2}$  to each of its other neighbours by R3-R6. We may therefore assume that $v$ has a $3(1)$-neighbour, say $z_1$. Recall that $z_2$ and $z_3$ are the neighbours of $v$ other than $x_i$'s and $u$. If one of $z_2,z_3$, say $z_2$, is a $4^+$-vertex other than $4(1)$, $4(2)$ and $5(3)$, then   $\mu^*(v)\geq \frac{11}{2}-4\times 1-2\times \frac{3}{4}= 0$ after  $v$ transfers $1$  to each $x_i$ by R1;  at most $\frac{3}{4}$ to each of its neighbours other than $x_i$'s and $z_2$ by R2-R6. On the other hand, if each of $z_2,z_3$ is a $3(0)$- or $4(1)$-vertex, then $\mu^*(v)\geq \frac{11}{2}-4\times 1-\frac{3}{4}-2\times \frac{1}{3}= 0$ after  $v$ transfers $1$  to each $x_i$ by R1;  $\frac{3}{4}$ to its $3(1)$-neighbour $z_1$ by R2; at most $\frac{1}{3}$ to each of $z_2,z_3$ by R3-R5.
We may therefore suppose that  one of $z_1,z_2$ is a $3(1)$-, $4(2)$- or $5(3)$-vertex while the other is a $3(0)$, $3(1)$, $4(1)$, $4(2)$- or $5(3)$-vertex. This means that $v$ is a light vertex, since $D(v)<\D+6+e_6(v)$. It then follows from Lemma \ref{lem:light-heavy} that each $x_i$ is heavy, and so each $y_i$ must be $10^+$-vertex by Corollary \ref{cor:Dplus1}. 
Since $v$ is weak-adjacent to four $10^+$-vertices, we conclude that there exist a face $f_{i}$ for $i\in [7]$ such that it is either a $5$-face having two adjacent $10^+$-vertices or a $6^+$-face. Similarly as above, $f_i$ transfers at least $1$ to $v$ by applying R12 and R13 together with Corollary \ref{cor:poor-path2}. Hence,  $\mu^*(v)\geq \frac{11}{2}+1-4\times 1-3\times \frac{3}{4}> 0$  after  $v$ transfers $1$  to each $x_i$ by R1;  at most $\frac{3}{4}$ to each of its other neighbours by R2-R6.  \medskip

Let $n_2(v)=5$. If two of  $y_i$'s are $8^-$-vertices, then the corresponding vertices $x_i$'s would be expendable (also light) vertices. In this case,  all the other $x_i$'s would be light vertices as well. It follows from Lemma \ref{lem:light-heavy} that $v$ is heavy, and so $D(v)\geq \D+6+|S|$ with $|S|\geq 5$ by Lemma \ref{lem:DplusS},  where $S$ is the set of light vertices of distance at most $2$ from $v$. It can be easily seen that $v$ has a $6^+$-neighbour.  In particular, $v$ cannot have a $3(1)$-neighbour $u$ by Lemma \ref{lem:DplusS} and Corollary \ref{cor:7-heavy2}, since $D(u)\leq \D+3+|S|$ with $|S|\geq 6$,  where $S$ is the set of light vertices of distance at most $2$ from $u$.
Thus, $\mu^*(v)\geq \frac{11}{2}-5\times 1-\frac{1}{2}= 0$ after  $v$ transfers $1$  to each $x_i$ by R1;  $\frac{1}{2}$ to its neighbour other than $2$ and $6^+$-vertex by R3-R6. We may therefore assume that  at most one of $x_i$'s is an expendable vertex. 

Suppose first that only one of  $x_i$'s is expendable, say $x_1$. Then each of $x_2,x_3,x_4,x_5$ is adjacent to a $9^+$-vertex, i.e., $v$ is weak-adjacent to four $9^+$-neighbours. It follows from Lemma \ref{lem:light-heavy} that $v$ is heavy, and so $D(v)\geq \D+6+|S|$ with $|S|\geq 5$ by Lemma \ref{lem:DplusS},  where $S$ is the set of light vertices of distance at most $2$ from $v$. We then conclude that $v$ cannot have two vertices forming $3(1)$-, $4(2)$- or $5(3)$-vertices. Moreover, $v$ has no $4$-neighbour when it has a $3(1)$-neighbour. This means that $v$ sends totally at most $\frac{5}{6}=\max\{\frac{3}{4},\frac{1}{2}+\frac{1}{3}\}$ to its $3^+$-neighbours.  On the other hand, since $v$ is weak-adjacent to four $9^+$-vertices, we conclude that there exists a face $f_{i}$ for $i\in [7]$ such that it is either a $5$-face having two adjacent $9^+$-vertices or a $6^+$-face. Similarly as above, $f_i$ transfers at least $\frac{1}{3}$ to $v$ by applying R11-R13 together with Corollary \ref{cor:poor-path2}.   Thus, $\mu^*(v)\geq \frac{11}{2}+\frac{1}{3}-5\times 1-\frac{5}{6}= 0$  after  $v$ transfers $1$  to each $x_i$ by R1 and  totally at most $\frac{5}{6}$ to its $3^+$-neighbours by R2-R6.

Suppose now that none of  $x_i$'s is expendable. Then each of those $2$-vertices has a $9^+$-neighbour. Since $v$ is weak-adjacent to five $9^+$-vertices, we conclude that there exist three faces $f_{i_1},f_{i_2},f_{i_3}$ for $i_1,i_2,i_3\in [7]$ such that each of them is either a $5$-face having two adjacent $9^+$-vertices or a $6^+$-face. Recall that if $f_i$ is a $6^+$-face, then $v$ receives at least $\frac{1}{3}$ from $f_i$ by R13 together with Corollary \ref{cor:poor-path2}. Thus, similarly as above, each of $f_{i_1},f_{i_2},f_{i_3}$ transfers at least $\frac{1}{3}$ to $v$ by applying R11-R13 together with Corollary \ref{cor:poor-path2}.  Hence,  $\mu^*(v)\geq \frac{11}{2}+3\times \frac{1}{3}-5\times 1-2\times \frac{3}{4}= 0$  after  $v$ transfers $1$  to each $x_i$ by R1;  at most $\frac{3}{4}$ to each of its other neighbours by R2-R6.   \medskip

Let $n_2(v)\geq 6$. Similarly as above, if two of  $x_i$'s are expendable (also light) vertices, then all the other $x_i$'s would be light vertices as well, and this forces that $v$ has a $10^+$-neighbour by Lemmas \ref{lem:light-heavy} and  \ref{lem:DplusS}. Note that $v$ receives $1$ from its $10^+$-neighbour by R8. Thus, $\mu^*(v)\geq \frac{11}{2}+1-6\times 1> 0$ after  $v$ transfers $1$ to each $x_i$ by R1.
We may therefore assume that  at most one of $x_i$'s is expendable. Suppose first that only one of $x_i$'s is an expendable vertex, say $x_1$. Then each of $x_2,x_3,\ldots,x_6$ is adjacent to a $9^+$-vertex, i.e., $v$ is weak-adjacent to five $9^+$-vertices. It follows from Lemma \ref{lem:light-heavy} that $v$ is heavy, and so $D(v)\geq \D+6+|S|$ with $|S|\geq 5$ by Lemma \ref{lem:DplusS},  where $S$ is the set of light vertices of distance at most $2$ from $v$. We then conclude that $v$ has no $4^-$-, and $5(3)$-neighbours, i.e., $v$ sends charge to only its $2$-neighbours. On the other hand, since $v$ is weak-adjacent to at least five $9^+$-vertices, we conclude that there exist three faces $f_{i_1},f_{i_2},f_{i_3}$ for $i_1,i_2,i_3\in [7]$ such that each of them is either a $5$-face having two adjacent $9^+$-vertices or a $6^+$-face.  Similarly as above, each of $f_{i_1},f_{i_2},f_{i_3}$ transfers at least $\frac{1}{3}$ to $v$ by applying R11-R13 together with Corollary \ref{cor:poor-path2}.  Hence,  $\mu^*(v)\geq \frac{11}{2}+3\times \frac{1}{3}-6\times 1>0$  after  $v$ transfers at most $1$  to each of its $2$-neighbours by R1.

If none of  $x_i$'s is expendable, then  each of those $2$-vertices has a $9^+$-neighbour.  Since $v$ is weak-adjacent to at least six $9^+$-vertices, we conclude that there exist  five faces $f_{i_1},f_{i_2},\ldots, f_{i_5}$ for $i_1,i_2,\ldots,i_5\in [7]$ such that each of them is either a $5$-face having two adjacent $9^+$-vertices or a $6^+$-face. Similarly as above, each of $f_{i_1},f_{i_2},\ldots, f_{i_5}$ transfers at least $\frac{1}{3}$ to $v$ by applying R11-R13 together with Corollary \ref{cor:poor-path2}.  Hence,  $\mu^*(v)\geq \frac{11}{2}+5\times \frac{1}{3}-7\times 1> 0$  after  $v$ transfers at most $1$  to each of its neighbours by R1-R6.   \medskip

\textbf{(7).} Let $k=8$. The initial charge of $v$ is $\mu(v)=\frac{3d(v)}{2}-5=7$.  Observe that any $3^+$-vertex receives at most $\frac{3}{4}$ from its  $8$-neighbour by R2-R6. Thus  $\mu^*(v)\geq 7-4\times 1 -4\times \frac{3}{4}= 0$ when  $v$ has at most four $2$-neighbour.  We may therefore assume that $5\leq n_2(v)=t \leq 8$. Denote by $x_1,x_2,\ldots,x_t$ the $2$-neighbours of $v$, and let $y_i$ be the other neighbour of $x_i$ different from $v$.  Let $z_1,z_2,\ldots,z_{8-t}$ be the neighbour of $v$ other than $x_1,x_2,\ldots,x_t$.   Let $f_1,f_2\ldots,f_8$ be faces incident to $v$, and suppose that they have a clockwise order on the plane. Observe that any vertex other than $2$-vertex and $3(1)$-vertex   receives at most $\frac{1}{2}$ from its $8$-neighbour by R3-R6.
We remark that if $v$ has an $8$-neighbour $u$, then $v$ sends $\frac{1}{8}$ to each face containing $vu$ by R10. So, $v$ sends totally $\frac{1}{4}$ to both faces containing $vu$. In such a case, for simplicity, we say that $v$ sends $\frac{1}{4}$ to $u$ by R10 instead of saying that $v$ sends $\frac{1}{8}$ to each face containing $vu$ in the sequel. \medskip

We start by noting that if three of $x_i$'s are  expendable (also light) vertices, then, by Lemma \ref{lem:DplusS}, all the other $x_i$'s are light as well. In such a case, $v$ is heavy by Lemma \ref{lem:light-heavy}, and so it has a $4(0)$- or $5^+$-neighbour $u$ different from $5(3)$-vertex. 
In particular, $v$ cannot have a $3(1)$-neighbour $w$ by Lemma \ref{lem:DplusS} and Corollary \ref{cor:7-heavy2}, since $D(w)\leq \D+4+|S|$ with $|S|\geq 6$,  where $S$ is the set of light vertices of distance at most $2$ from $w$. 
On the other hand, if $n_2(v)=7$, then $u$ is a $9^+$-vertex by Lemma \ref{lem:DplusS}, which means that  $n_2(v)\leq 6$ when $v$ is adjacent to an $8$-vertex.
Consequently, if  $n_2(v)=7$, then  $\mu^*(v)\geq 7-7\times 1= 0$  after  $v$ transfers at most $1$ to each of its neighbours other than $u$ by R1-R6.  If  $n_2(v)\leq 6$, then  $\mu^*(v)\geq 7-6\times 1-2\times \frac{1}{2}= 0$ after $v$ transfers $1$  to each $x_i$ by R1;  at most $\frac{1}{2}$ to each of its neighbour other than $x_i$'s by R3-R6 and R10.    
Thus we may further assume that  at most two of $x_i$'s are expendable vertices. Moreover, if one of $x_i$'s is an expendable vertex, then $v$ must have a neighbour other than $2$-, $3(1)$- and $4(2)$-vertex by Lemma \ref{lem:DplusS}. \medskip

Let $n_2(v)=5$. If $v$ has a neighbour different from $2$- and $3(1)$-vertex, then $\mu^*(v)\geq 7-5\times 1-2\times \frac{3}{4}-\frac{1}{2}=0$  after  $v$ transfers $1$  to each $x_i$ by R1;   $\frac{3}{4}$ to each of its $3(1)$-neighbours by R2; at most $\frac{1}{2}$  to each of its other neighbours by R3-R6 and R10.     
We may therefore assume that $v$ has exactly three $3(1)$-neighbours.  Recall that if one of $x_i$'s is a light vertex, then $v$ must have a neighbour other than $2$-, $3(1)$- and $4(2)$-vertex. We then infer that  all $2$-neighbours of $v$ are heavy. 
This particularly implies that each $y_i$ is an $8^+$-vertex. 
Since $v$ is weak-adjacent to  five $8^+$-vertices, we conclude that there exist $f_{i_1},f_{i_2}$ for $i_1,i_2\in [8]$ such that each of them is either a $5$-face having two adjacent $8^+$-vertices $x,y$ or a $6^+$-face. If one of those faces has such vertices $x,y$, then it gets totally at least $\frac{1}{4}$ from $x,y$ by R10-R12. It then follows from applying R13 together with Corollary \ref{cor:poor-path2} that each of those faces transfers at least $\frac{1}{4}$ to $v$. Hence, $\mu^*(v)\geq 7+2\times \frac{1}{4}-5\times 1-3\times \frac{3}{4}>0$ after  $v$ transfers $1$  to each $x_i$ by R1; at most $\frac{3}{4}$ to each of its other neighbours  by R2-R6 and R10. \medskip

Let $n_2(v)=6$.  If $v$ has no $3(1)$-neighbour, then $\mu^*(v)\geq 7-6\times 1-2\times \frac{1}{2}=0$ after  $v$ transfers $1$  to each $x_i$ by R1;  at most $\frac{1}{2}$  to each of its other neighbours by R3-R6 and R10.  
If $v$ has two $3(1)$-neighbours,  then, similarly as above, all $2$-neighbours of $v$ must be heavy, and so each $y_i$ is an $8^+$-vertex. Since $v$ is weak-adjacent to  six $8^+$-vertices, we conclude that there exist $f_{i_1},f_{i_2},\ldots,f_{i_4}$ for $i_1,i_2,\ldots,i_4\in [8]$ such that each of them is either a $5$-face having two adjacent $8^+$-vertices or a $6^+$-face.  Similarly as above, each of $f_{i_1},f_{i_2},\ldots, f_{i_4}$ transfers at least $\frac{1}{4}$ to $v$ by applying R10-R13 together with Corollary \ref{cor:poor-path2}. Hence,  $\mu^*(v)\geq 7+4\times \frac{1}{4}-6\times 1-2\times \frac{3}{4}>0$ after  $v$ transfers $1$  to each $x_i$ by R1;  $\frac{3}{4}$ to each of its $3(1)$-neighbour  by R2. 

Finally, we assume that $v$ has exactly one $3(1)$-neighbour. If two of $x_i$'s are expendable (also light) vertices, then $v$ has a $4(0)$- or $5^+$-neighbour $u$ different from $5(3)$-vertex by Lemma \ref{lem:DplusS}. Thus, $\mu^*(v)\geq 7-6\times 1- \frac{3}{4}-\frac{1}{4}=0$ after  $v$ transfers $1$  to each $x_i$ by R1;  $\frac{3}{4}$ to its $3(1)$-neighbour  by R2; $\frac{1}{4}$ to its $8$-neighbour  by R10.  
If at most one of $x_i$'s is light, then $v$ would have at least five heavy $2$-neighbours. Clearly, each of those $2$-vertices has two $8^+$-neighbours. That is, $v$ is weak-adjacent to at least five  $8^+$-vertex. We then conclude that there exist $f_{i_1},f_{i_2}$  for $i_1,i_2\in [8]$ such that each of them is either a $5$-face having two adjacent $8^+$-vertices or a $6^+$-face. Similarly as above,  each of those faces transfers at least $\frac{1}{4}$ to $v$ by applying R10-R13 together with Corollary \ref{cor:poor-path2}. Hence,  $\mu^*(v)\geq 7+2\times \frac{1}{4}-6\times 1- \frac{3}{4}-\frac{1}{2}>0$ after  $v$ transfers $1$  to each $x_i$ by R1;  $\frac{3}{4}$ to its $3(1)$-neighbour  by R2;  at most $\frac{1}{2}$  to its neighbour other than $2$- and $3(1)$-vertex by R3-R6 and R10.\medskip

Let $n_2(v)=7$.  Suppose first that $v$ has a $3(1)$-neighbour $u$. If one of $x_i$'s
 is light, then $v$ would be heavy by Lemma \ref{lem:light-heavy}. However, we have $D(v)=17<\D+6+|S|$ with $|S|\geq 2$, where $S$ is the set of light vertices of distance at most  $2$ from $v$, a contradiction by Lemma \ref{lem:DplusS}. Thus, all $x_i$'s  are heavy. This in particular implies that each $y_i$ is an $8^+$-vertex. Since $v$ is weak-adjacent to seven $8^+$-vertices, we conclude that there exist six faces $f_{i_1},f_{i_2},\ldots,f_{i_6}$ for $i_1,i_2,\ldots,i_6\in [8]$ such that each of them is either a $5$-face having two adjacent $8^+$-vertices or a $6^+$-face.  Each of those faces transfers at least $\frac{1}{4}$ to $v$ by R10-R13 together with Corollary \ref{cor:poor-path2}. Hence, $\mu^*(v)\geq 7+6\times \frac{1}{4}-7\times 1- \frac{3}{4}>0$ after  $v$ transfers $1$  to each $x_i$ by R1;  $\frac{3}{4}$ to its $3(1)$-neighbour by R2.
We may further assume that $v$ has no $3(1)$-neighbour. 

Note that if two of $x_i$'s are light, then $v$ must have a $4(0)$- or $5^+$-neighbour $u$ different from $5(3)$-vertex by Lemma \ref{lem:DplusS}. In such a case, we conclude that there exist $f_{i_1},f_{i_2}$ for $i_1,i_2\in [8]$ such that each of them is either a $5$-face having two adjacent $8^+$-vertices or a $6^+$-face. Each of those faces transfers at least $\frac{1}{4}$ to $v$ by R10-R13 together with Corollary \ref{cor:poor-path2}. Hence, $\mu^*(v)\geq 7+2\times \frac{1}{4}-7\times 1- \frac{1}{4}>0$ after  $v$ transfers $1$  to each $x_i$ by R1;  $\frac{1}{4}$ to $u$  by R10 when $u$ is an $8$-vertex. 

Similarly as above, if at most one of $x_i$'s is light, then $v$ is weak-adjacent to at least six $8^+$-vertices. We then conclude that there exist $f_{i_1},f_{i_2},\ldots,f_{i_4}$ for $i_1,i_2,\ldots,i_4\in [8]$ such that each of them is either a $5$-face having two adjacent $8^+$-vertices or a $6^+$-face.   Each of those faces transfers at least $\frac{1}{4}$ to $v$ by R10-R13 together with Corollary \ref{cor:poor-path2}. Hence, $\mu^*(v)\geq 7+4\times \frac{1}{4}-7\times 1- \frac{1}{2}>0$ after  $v$ transfers $1$  to each $x_i$ by R1;  $\frac{1}{2}$ to its neighbour other than $x_i$'s  by R3-R6 and R10. \medskip

Let $n_2(v)=8$.  All neighbours of $v$ must be heavy, since otherwise, if one of $x_i$'s is light, then $v$ would be light as well, a contradiction by Lemma \ref{lem:light-heavy}. This implies that each $y_i$ is an $8^+$-vertex.
We then conclude that each face incident to $v$ is either a $5$-face having two adjacent $8^+$-vertices or a $6^+$-face.  Each of those faces transfers at least $\frac{1}{4}$ to $v$ by R10-R13 together with Corollary \ref{cor:poor-path2}. Hence, $\mu^*(v)\geq 7+8\times \frac{1}{4}-8\times 1>0$ after  $v$ transfers $1$  to each $x_i$ by R1. \medskip

\textbf{(8).} Let $k=9$. The initial charge of $v$ is $\mu(v)=\frac{3d(v)}{2}-5=\frac{17}{2}$.
We first note that if $v$ has a $7^+$-neighbours $u$, then  $\mu^*(v)\geq \frac{17}{2}-8\times 1- \frac{1}{2}=0$ after  $v$ transfers $1$  to each of its $2$-neighbour by R1; $\frac{1}{4}$ to each faces containing $vu$ by R11. Thus we further assume that $v$ has no $7^+$-neighbour, i.e., the rule R11 cannot be applied to $v$.
On the other hand, if $v$ has at most seven $2$-neighbours, then   $\mu^*(v)\geq \frac{17}{2}-7\times 1-2\times \frac{3}{4}=0$ after  $v$ transfers $1$  to each $x_i$ by R1; at most $\frac{3}{4}$ to each of its other neighbours  by R2-R7.  We may therefore assume that $8\leq n_2(v)=t \leq 9$. Denote by $x_1,x_2,\ldots,x_t$ the $2$-neighbours of $v$, and let $y_i$ be the other neighbour of $x_i$ different from $v$.  Let $f_1,f_2\ldots,f_9$ be faces incident to $v$, and suppose that they have a clockwise order on the plane. \medskip

Let $n_2(v)=8$. Denote by $z$ the $3^+$-neighbour of $v$. If four of $x_i$'s are   expendable (also light) vertices, then all $2$-neighbours of $v$ would be light vertices as well, by Lemma \ref{lem:DplusS}. In such a case, $v$ must be heavy by Lemma \ref{lem:light-heavy}, and so it has a $8^+$-neighbour $u$. Thus,  $\mu^*(v)\geq \frac{17}{2}-8\times 1- \frac{1}{2}=0$ after  $v$ transfers $1$  to each $x_i$ by R1;  $\frac{1}{4}$ to each face containing $vu$ by R11.   
We may therefore assume that  at most three of $x_i$'s are light vertices.

Suppose that exactly three of $x_i$'s are expendable (also light) vertices. By Lemma \ref{lem:light-heavy}, $v$ is heavy, and so $z$ would be heavy $3(0)$-vertex or $4^+$-vertex. In this case, $\mu^*(v)\geq \frac{17}{2}-8\times 1-\frac{1}{2}=0$ after  $v$ transfers at most $1$  to each of its $2$-neighbours by R1 and at most $\frac{1}{2}$ to $z$ by R3-R7.

Next, we suppose that exactly two of $x_i$'s are expendable (also light) vertices, say $x_7,x_8$. By Lemma \ref{lem:light-heavy}, $v$ is heavy. If $z$ is not a $3(1)$-vertex, then  $\mu^*(v)\geq \frac{17}{2}-8\times 1-\frac{1}{2}=0$ after  $v$ transfers at most $1$  to each of its $2$-neighbours by R1 and at most $\frac{1}{2}$ to $z$ by R3-R7. If $z$ is $3(1)$-vertex, then each of $x_i$ for $i\in [6]$ must be heavy, i.e., each $y_i$ for $i\in [6]$ is a $9^+$-vertex.  This implies that $v$ is weak-adjacent to at least six $9^+$-vertices.
We then conclude that there exist three faces incident to $v$ such that  each of them is either a $5$-face having two adjacent $9^+$-vertices or a $6^+$-face. Each of those faces transfers at least $\frac{1}{3}$ to $v$ by R11-R13 together with Corollary \ref{cor:poor-path2}. Hence, $\mu^*(v)\geq \frac{17}{2}+3\times \frac{1}{3}-9\times 1>0$ after  $v$ transfers at most $1$  to each of its neighbours by R1-R7.

We now suppose that exactly one of $x_i$'s is expendable (also light) vertex, say $x_8$. By Lemma \ref{lem:light-heavy}, $v$ is heavy.  Note that each $y_i$ for $i\in [7]$ is an $8^+$-vertex.  This implies that $v$ is weak-adjacent to seven $8^+$-vertices.
We then conclude that there exist two faces incident to $v$ such that  each of them is either a $5$-face having two adjacent $8^+$-vertices or a $6^+$-face. Each of those faces transfers at least $\frac{1}{4}$ to $v$ by R10-R13 together with Corollary \ref{cor:poor-path2}. Hence, $\mu^*(v)\geq \frac{17}{2}+2\times \frac{1}{4}-9\times 1=0$ after  $v$ transfers at most $1$  to each of its neighbours by R1-R7.

Finally suppose that none of $x_i$'s is an expendable (also light) vertices. Note that each $y_i$ for $i\in [8]$ is a $7^+$-vertex.  This implies that $v$ is weak-adjacent to eight $7^+$-vertices.
We then conclude that there exist four faces $f_{i_1},f_{i_2},\ldots,f_{i_4}$  for $i_1,i_2,\ldots,i_4\in [8]$  such that each of them is either a $5$-face having two adjacent $7^+$-vertices  
or a $6^+$-face. By applying R9-R13 together with   Corollary \ref{cor:poor-path2}, we infer that each of $f_{i_2},f_{i_3}$ sends at least $\frac{1}{4}$ to $v$.  Hence, $\mu^*(v)\geq \frac{17}{2}+ 2\times \frac{1}{4}-9\times 1=0$ after  $v$ transfers at most $1$  to each of its neighbours by R1-R7.   \medskip

Let $n_2(v)=9$. Observe that at most two of $x_i$'s are   expendable (also light) vertices, since otherwise, $v$ and its a neighbour would be light vertices, a contradiction by Lemma \ref{lem:light-heavy}. 

First we suppose that exactly two of $x_i$'s are expendable (also light) vertices. By Lemma \ref{lem:light-heavy}, $v$ is heavy, and so seven of $x_i$'s are heavy by Lemma \ref{lem:DplusS}, say $x_1,x_2,\ldots,x_7$. Note that each $y_i$ for $i\in [7]$ is a $9^+$-vertex.  This implies that $v$ is weak-adjacent to at least seven $9^+$-vertices.
We then conclude that there exist three faces incident to $v$ such that  each of them is either a $5$-face having two adjacent $9^+$-vertices or a $6^+$-face. Each of those faces transfers at least $\frac{1}{3}$ to $v$ by R11-R13 together with Corollary \ref{cor:poor-path2}. Hence, $\mu^*(v)\geq \frac{17}{2}+3\times \frac{1}{3}-9\times 1>0$ after  $v$ transfers at most $1$  to each of its neighbours by R1-R7.

We now suppose that exactly one of $x_i$'s is expendable (also light) vertex. By Lemma \ref{lem:light-heavy}, $v$ is heavy, and so eight of $x_i$'s are heavy by Lemma \ref{lem:DplusS}, say $x_1,x_2,\ldots,x_8$. Note that each $y_i$ for $i\in [8]$ is a $8^+$-vertex.  This implies that $v$ is weak-adjacent to at least seven $8^+$-vertices.
We then conclude that there exist two faces incident to $v$ such that  each of them is either a $5$-face having two adjacent $8^+$-vertices or a $6^+$-face. Each of those faces transfers at least $\frac{1}{4}$ to $v$ by R10-R13 together with Corollary \ref{cor:poor-path2}. Hence, $\mu^*(v)\geq \frac{17}{2}+2\times \frac{1}{4}-9\times 1=0$ after  $v$ transfers at most $1$  to each of its neighbours by R1-R7.

Now we suppose that none of $x_i$'s is an expendable (also light) vertices. Note that each $y_i$ for $i\in [9]$ is a $7^+$-vertex.  This implies that $v$ is weak-adjacent to at least nine $7^+$-vertices.
We then conclude each face incident to $v$ sends at least $\frac{1}{4}$ to $v$  by applying R9-R13 together with   Corollary \ref{cor:poor-path2}. Hence, $\mu^*(v)\geq \frac{17}{2}+ 9\times \frac{1}{4}-9\times 1>0$ after  $v$ transfers at most $1$  to each of its neighbours by R1-R7.   \medskip

\textbf{(9).} Let $k\geq 10 $. By R8, $v$ sends $1$ to each of its $8^-$-neighbour.   
 If $v$ has a $9^+$-neighbour $u$, then $v$ sends $\frac{1}{2}$ to each face containing $vu$ by R12. Consequently, $\mu^*(v)\geq 0$.


%
%
%

\section*{Acknowledgments}
This research was supported by TUBITAK (The Scientific and Technological Research Council of Turkey) under the project number 122F250. We thank anonymous referees for carefully reading and for their invaluable comments and suggestions which have improved the presentation of this paper.

\bibliographystyle{abbrv} 
\bibliography{zd-mybib}

\end{document}